\numberwithin{equation}{section}
\numberwithin{figure}{section}
\theoremstyle{plain}
\newtheorem{thm}{\protect\theoremname}
  \theoremstyle{definition}
  \newtheorem{defn}[thm]{\protect\definitionname}
  \theoremstyle{plain}
  \newtheorem{prop}[thm]{\protect\propositionname}
  \theoremstyle{plain}
  \newtheorem{lem}[thm]{\protect\lemmaname}
  \theoremstyle{remark}
  \newtheorem*{rem*}{\protect\remarkname}
  \theoremstyle{plain}
  \newtheorem{cor}[thm]{\protect\corollaryname}
  \providecommand{\corollaryname}{Corollary}
  \providecommand{\definitionname}{Definition}
  \providecommand{\lemmaname}{Lemma}
  \providecommand{\propositionname}{Proposition}
  \providecommand{\remarkname}{Remark}
\providecommand{\theoremname}{Theorem}
\begin{document}
\global\long\def\bbN{\mathbb{N}}

\global\long\def\bbQ{\mathbb{Q}}

\global\long\def\bbR{\mathbb{R}}

\global\long\def\bbZ{\mathbb{Z}}

\global\long\def\bbC{\mathbb{C}}

\global\long\def\eset{\emptyset}

\global\long\def\nto{\nrightarrow}

\global\long\def\re{\mathrm{Re\,}}

\global\long\def\im{\mathrm{Im\,}}

\global\long\def\limti{{\displaystyle \lim_{n\to\infty}}}

\global\long\def\sumnti{{\displaystyle \sum_{n=1}^{\infty}}}

\global\long\def\sumktn{{\displaystyle \sum_{k=1}^{n}}}

\global\long\def\E{{\bf E}}

\global\long\def\ind{\mathbbm{1}}

\global\long\def\O{O}

\global\long\def\Leb{\mathcal{\lambda}}

\global\long\def\calB{\mathcal{B}}

\global\long\def\Re{Re}

\global\long\def\Im{Im}

\global\long\def\limdist{\overset{d}{\longrightarrow}}

\title{Invariance principle for local time by quasi-compactness}

\author{Michael Bromberg\\
School of mathematical sciences, Tel Aviv University, Tel Aviv 69978,
Israel.}
\begin{abstract}
The objective of this paper is to prove a functional weak invariance
principle for a local time of a process of the form $X_{n}=\varphi\circ T^{n}$
where $\left(X,\mathcal{B},T,m\right)$ is a measure preserving system
with a transfer operator acting quasi-compactly on a large enough
Banach space of functions and $\varphi\in L^{2}\left(m\right)$ is
an aperiodic observable.%
\thanks{This research was partially supported by ISF grant 1599/13.%
}%
\thanks{This paper is part of a thesis submitted in partial fulfillment of
the requirements for the degree of Doctor of Philosophy (Mathematics)
at the Tel Aviv University%
} 
\end{abstract}
\maketitle

\section{Introduction}

To introduce the motivation behind the work in this paper, we first
describe the relevant problem and results in the classical case where
$\left(X_{n}\right)$ is a sequence of independent identically distributed
random variables. In this case, setting $S_{n}=\sum_{k=1}^{n}X_{k}$
for $n\geq1$, $S_{0}=0$, the classical invariance principle states
that the sequence $\frac{S_{n}}{\sqrt{n}}$ converges in law to the
Gaussian distribution. Recall that a càdlàg function is a function
that is continuous on the right with finite limits on the left of
every point in its domain of definition. We denote by $D$ the space
of càdlàg functions on $\left[0,1\right]$. Setting $\omega_{n}\left(t\right)=\frac{1}{\sqrt{n}}\sum_{k=0}^{\left[nt\right]}S_{k}$,
$t\in\left[0,1\right]$, where $\left[x\right]$ is the integral value
of $x$, we obtain a sequence of càdlàg functions and a stronger,
functional invariance principle, stating that the random functions
$\omega_{n}\left(\cdot\right)$ converge in law to the Brownian motion
$\omega\left(\cdot\right)$, where $\omega\left(\cdot\right)$ is
uniquely determined by the equality $E\left(\left(\omega\left(1\right)\right)^{2}\right)=E\left(X_{i}^{2}\right)$
(see \cite{Bil}). 

For a general function $f\in D$, the \textit{occupation measure}
of $f$ up to time $1$ is defined by $ $
\begin{equation}
\nu_{f}\left(A\right)=\int_{0}^{1}\ind_{A}\left(f\left(t\right)\right)dt,\,\, A\in B\left(\bbR\right)\label{eq: occupation meas}
\end{equation}
where $B\left(\bbR\right)$ denotes the Borel $\sigma$-field on $\bbR$.
Recall that the occupation measure of the Brownian motion is almost
surely absolutely contiuous with respect to the Lebesgue measure on
$\bbR$\cite{MP}. The (random) density function with respect to the
occupation measure, which we denote by $l\left(x\right)$, is the
local time of the Brownian motion. Thus 
\[
\int\limits _{0}^{1}\ind_{A}\left(\omega\left(t\right)\right)dt=\int\limits _{A}l\left(x\right)dx
\]
for all $A\in\calB\left(\bbR\right)$, at almost every sample point
of the Brownian motion $\omega$. Moreover, $l\left(x\right)$ is
almost surely continuous. 

Local time $l_{n}\left(x\right)$ of the process $\omega_{n}$, (which
we proceed to define in what follows) may be roughly regarded as the
density of the occupation measure $\nu_{\omega_{n}}$, in the sense
that $\nu_{\omega_{n}}\left[a,b\right]-\int_{\left[a,b\right]}l_{n}\left(x\right)dx$
converges to $0$ in law . To define $l_{n}$ we distinguish between
the lattice and non-lattice case, namely between the case when $\left(X_{n}\right)$
is a sequence of random variables taking values in the lattice $\bbZ$,
and when $\left(X_{n}\right)$ is a sequence of random variables taking
values in $\bbR$. 

In the lattice case define 
\[
l_{n}\left(x\right):=n^{-\frac{1}{2}}\#\left\{ k\in\left\{ 1,...,\left[nt\right]\right\} :S_{k}=\left[\sqrt{n}x\right]\right\} 
\]
 where $\left[\cdot\right]$ is the integral value function. Thus,
$l_{n}\left(x\right)$ is the number of arrivals of the random walk
$\left(S_{k}\right)$ at the point $\left(\left[\sqrt{n}x\right]\right)$
up to time $n$, normalized by $\sqrt{n}$. 

In the non-lattice case, let $f:\bbR\rightarrow\bbR$ be continuous
and integrable with integral $1$ and define 
\begin{equation}
l_{n}\left(x\right):=n^{-\frac{1}{2}}\sum_{k=1}^{n}f\left(S_{n}-\sqrt{n}x\right).\label{eq:local time def}
\end{equation}
$ $The invariance principle for local time implies convergence of
$l_{n}\left(x\right)$ to the local time of the Brownian motion $l\left(x\right)$.
More precisely, we say that the invariance principle for local times
holds if the sequence $\left(\omega_{n},l_{n}\right)$ converges in
law to $\left(\omega,l\right)$. 

The invariance principle for local times in the lattice case, under
the assumption of aperiodicity on the random walk was proved in \cite{Bor}.
The invariance principle for the non-lattice case, under assumption
of aperiodicity and an assumption that the characteristic function
of $X_{i}$ is square integrable was proved in \cite{BI}. 

There has been a considerable amount of research invested into generalizing
the invariance principles in the independent case to the more general
settings of various mixing conditions on the processes $\left(X_{n}\right)$,
but to the author's knowlege, no such generalization appeared in literature
for the invariance principle for local time until \cite{BK}, where
the author and Z.Kosloff prove the invariance principle for local
time in the case where $\left(X_{n}\right)$ is a finite state, lattice
valued Markov chain. In \cite{Br}, a further generalization was proved
by the author to the case where $\left(X_{n}\right)$ are of the form
$X_{n}:=\varphi\circ T^{n}$, where $\left(X,\mathcal{C},m,T\right)$
is a Gibbs-Markov system and $\varphi$ is an aperiodic, square integrable
function with values in $\bbZ$. The purpose of this paper is to give
a proof of the invariance principle for local time in the non-lattice
case given that the functional invariance principle holds, under the
setting, where the random variables $\left(X_{n}\right)$ are generated
by a dynamical system $\left(X,\mathcal{C},m,T\right)$ with a quasi-compact
transfer operator.

\subsection{Outline of the remaining sections. }

Section (\ref{sec:Characteristic-function-operator}) describes the
assumptions used in the proof of the main theorem and develops the
basic tools needed for the proof. \ref{sec:Main theorem} describes
the notions of convergence used in this paper and states the main
theorem. Some concrete systems where our assumptions hold, as well
as applications of the main theorem are provided in section \ref{sec:Applications-and-Examples}.
Section \ref{sec:Estimates} provides the probability estimates needed
for the proof. Section \ref{sec:Tightness} proves tigthness of the
local time process, while section \ref{Sec:Identifying the Only Possible Limit Point}
finalizes the proof, by identifying the only posiible limit of the
local time process.

\section{\label{sec:Characteristic-function-operator}Characteristic function
operators and expansion of the main eigenvalue}

Let $\left(X,\mathcal{C},m,T\right)$ be a probability measure preserving
dynamical system. Let $\varphi:X\rightarrow\bbR$ be measurable, and
let 
\begin{equation}
X_{n}:=\varphi\circ T^{n-1},\: S_{n}:=\sum_{k=1}^{n}X_{k},\ n\in\bbN,\ S_{0}=0.\label{eq: Definition of rvs}
\end{equation}
Consider $T$ as an operator on $L^{\infty}\left(m\right)$ defined
by $Tf=f\circ T$. Then the transfer operator, also known as the Frobenius-Perron
operator, $\hat{T}:L^{1}\left(m\right)\rightarrow L^{1}\left(m\right)$
is the pre-dual of $T$, uniquely defined by the equation 
\[
\int f\cdot g\circ T\, d\mu=\int\hat{T}f\cdot g\, d\mu\,\,\forall f\in L^{1},g\in L^{\infty}.
\]
We note for future reference that $\hat{T}$ is a positive operator
in the sense that $\hat{T}f\geq0$ if $f\geq0$ and $\hat{T}$ real
in the sense that if $f$ is a real valued function, then $\hat{T}f$
is real valued. 

The characteristic function operators associated to $\varphi$ is
a family of operators $P\left(t\right):L^{1}\left(m\right)\rightarrow L^{1}\left(m\right)$
defined for all $t\in\bbR$ by 
\[
P\left(t\right)f:=\hat{T}\left(e^{it\varphi}f\right).
\]
Note that at $t=0$ we have the equality $P\left(0\right)=\hat{T}$.
The reason for the name of $P\left(t\right)$ is due to the fact that
$m\left(e^{it\varphi}f\right)=m\left(P\left(t\right)\left(f\right)\right)$
and in particular $m\left(P\left(t\right)\ind\right)$ gives the characteristic
function of $\varphi$. Characteristic function operators may be used
to prove convergence theorems and estimates for $\left(X_{n}\right)$
and $\left(S_{n}\right)$ similarly to the way characteristic functions
are used for the independent case. In particular powers of $P\left(t\right)$
give rise to characteristic functions of $S_{n}$ as shown by the
following equality (which is proved by simple induction using definitions).
\[
m\left(e^{itS_{n}}\right)=m\left(P^{n}\left(t\right)\ind\right).
\]

\begin{defn}
An operator $T$ on a Banach space $\calB$ is called quasi-compact
with $s$ dominating simple eigenvalues if\end{defn}
\begin{enumerate}
\item There exist $T$-invariant spaces $F$ and $H$ such that $F$ is
an $s$ dimensional space and $\calB=F\oplus H$. 
\item $T$ is diagonizable when restricted to $F$, with all eigenvalues
having modulus equal to the spectral radius of $T$ which we denote
by $\rho\left(T\right)$.
\item When restricted to $H$, the spectral radius of $T$ is strictly less
than $\rho\left(T\right)$.
\end{enumerate}
Quasi-compactness of the characteristic function operator acting on
a large enough Banach space of functions, essentially helps in reducing
the behavior of the characteristic functions of $X_{n}$ to a more
familiar i.i.d case. In order to establish an invariance principle
for local time we make the following assumptions.

\subsection{Assumptions.}
\begin{itemize}
\item (A1) There exists a Banach space $\mathcal{B}\subseteq L^{\infty}\left(m\right)$
with norm $\left\Vert \cdot\right\Vert $ satisfying $\left\Vert \cdot\right\Vert _{\infty}\leq C\left\Vert \cdot\right\Vert $
for some $C>0$, such that $\ind\in\mathcal{B}$, and $f\in\calB\implies\bar{f}\in\calB$,
$\left|f\right|\in\calB$.
\item (A2) (Quasi-Compactness) $\hat{T}:\mathcal{B}\rightarrow\mathcal{B}$
is quasi compact with one dominating simple eigenvalue equal to $1$,
and is given by $\hat{T}\left(f\right)=m\left(f\right)\ind+N\left(f\right)$
where the spectral radius of $N$ satisfies $\rho\left(N\right)<1$. 
\item (A3) (Mean zero and finite second moment) $\varphi\in L^{2}\left(m\right)$;
$m\left(\varphi\right)=0$. 
\item (A4) (Continuity) $t\mapsto P\left(t\right)$ is a continuous function
from $\bbR$ to $Hom\left(\mathcal{B},\mathcal{B}\right)$. 
\item (A5) (Differentiability) There exists a neighborhood $I_{0}$ of $0$,
such that for all $t\in I_{0}$, $P\left(t\right):I_{0}\rightarrow Hom\left(\mathcal{B},\mathcal{B}\right)$
is twice continuously differentiable and 
\[
P'\left(0\right)\left(f\right)=\hat{T}\left(i\varphi f\right),\ P''\left(0\right)\left(f\right)=\hat{T}\left(-\varphi^{2}f\right).
\]

\item (A6) (Aperiodicity) The spectral radius of $P\left(t\right)$ as an
element of $Hom\left(\mathcal{B},\mathcal{B}\right)$, satisfies $\rho\left(P\left(t\right)\right)<1$,
$\forall t\neq0$. 
\end{itemize}

\subsection{Remarks.}

(A2) gives quasi-compactness of the characteristic function operator,
with one dominating simple eigenvalue. It follows that the eigenspace
corresponding to the dominating eigenvalue is the space of constant
functions and the projection onto this eigenspace is given by $f\mapsto m\left(f\right)\ind$.
Note that it is a consequence of the definition of the transfer operator
that $1$ is always an eigenvalue of $\hat{T}$, since $\hat{T}\left(\ind\right)=\ind$.
In applications, the requirement for the eigenvalue $1$ to be simple
corresponds to assumption of ergodicity of the system $\left(X,\mathcal{B},m,T\right)$,
while the lack of other eigenvalues of modulus $1$ corresponds to
a weak mixing condition on the system (see section \ref{sec:Applications-and-Examples}
for concrete examples). The condition $\mathcal{B}\subseteq L^{\infty}$
may be replaced by $\mathcal{B}\subseteq L^{p}$, $p\geq1$, with
a similar condition on norms. 

(A4) and (A5) guarantee continuity in $\bbR$ and differentiability
near $0$ of the characteristic function operators. Even though we
assume that $\varphi\in L^{2}\left(m\right)$, (A4) and (A5) do not
follow, since we do not assume that $\varphi\in\mathcal{B}$, or that
$e^{it\varphi}\in\mathcal{B}$ and we make no assumptions about $\mathcal{B}$
being closed under multiplication. Therefore, without (A4) we cannot
even conclude that $P\left(t\right)$ is $\calB$ invariant. In the
Gibbs-Markov case for example (see section \ref{sec:Applications-and-Examples}),
we do not require that $\varphi\in\calB$, but still assumptions (A4)
and (A5) are valid. Note, that the formula for $P'\left(0\right)$
assumes that the derivative of $P'\left(0\right)$ is what one expects
it to be, i.e analogous to the derivative of the characteristic function. 

Finally (A6) corresponds to an assumption of aperiodicity of the function
$f$. The name is derived from references to examples in section \ref{sec:Applications-and-Examples},
where it is shown that this requirement is equivalent to $e^{it\varphi}$
not being cohomologous to a constant. Functions satisfying this last
property are usually called aperiodic. This is a standard assumption
for proving local limit theorems, but is not required for the central
limit theorem. If $\left(X_{n}\right)$ are i.i.d's then aperiodicity
corresponds to the requirement that the modulus of the characteristic
function $E\left(e^{itX_{n}}\right)$ has modulus strictly less than
$1$, for all $t\neq0$. This requirement is satisfied if and only
if the random walk $S_{n}$ does not take values on a lattice in $\bbR$.

\subsection{A perturbation theorem and its implications.}

The proofs of this section follow the methods that first appeared
in \cite{Na} for analytic perturbations (see also \cite{HeH}, \cite{GH}).
We adapt these to our setting. In what follows $C^{m}\left(I,\mathcal{B}\right)$
is used to denote the space of $m$ times continuously differentiable
functions from $I$ to a Banach space $\calB$, $\mathcal{B}^{*}$
is the dual space of $\calB$ and $\left\langle \xi,f\right\rangle $
denotes the action of $\xi\in\calB^{*}$ on $f\in\calB$. The following
proposition is a direct implication of a standard perturbation theorem
(see \cite{HeH}, Theorem III.8).
\begin{prop}
\label{prop:Perturbation theorem}Let assumptions $\left(A1\right)-\left(A5\right)$
be satisfied. Then there exists an open neighborhood of $0$ $I\subseteq I_{0}$,
and functions $\lambda\left(t\right)\in C^{2}\left(I,\bbC\right)$,
$\xi\left(t\right)\in C^{2}\left(I,\mathcal{B}^{*}\right)$, $\eta\left(t\right)\in C^{2}\left(I,\calB\right)$,
$N\left(t\right)\in C^{2}\left(I,Hom\left(\calB,\calB\right)\right)$
such that for $t\in I$, 
\[
P\left(t\right)\eta\left(t\right)=\lambda\left(t\right)\eta\left(t\right),\ Q\left(t\right)^{*}\xi\left(t\right)=\lambda\left(t\right)\xi\left(t\right)\:\left\langle \xi\left(t\right),\eta\left(t\right)\right\rangle =1,
\]
for all $n\geq1$ 
\[
P^{n}\left(t\right)\left(\cdot\right)=\lambda\left(t\right)\left\langle \xi\left(t\right),\cdot\right\rangle \eta\left(t\right)+N^{n}\left(t\right)\left(\cdot\right)
\]
and 
\[
\rho\left(N\left(t\right)\right)<q<\inf_{t\in I}\left|\lambda\left(t\right)\right|,\ \left\Vert N^{n}\left(t\right)\right\Vert \leq Cq^{n}
\]
where $C>0$ and $0<q<1$ are constants. 
\end{prop}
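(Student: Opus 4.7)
The plan is to invoke the standard Kato--Sz.-Nagy perturbation theorem for an isolated simple eigenvalue, applied to the $C^{2}$ family $\{P(t)\}_{t\in I_{0}}$ near $t=0$. All regularity and uniform estimates will flow from the Riesz-integral representation of the spectral projection, combined with the resolvent identity.

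First, fix the geometry at $t=0$. By (A2), $P(0)=\hat{T}$ decomposes as $\hat{T}f=m(f)\ind+Nf$ with $\rho(N)<1$, so $1$ is an algebraically simple isolated eigenvalue of $\hat{T}$ and the rest of $\sigma(\hat{T})$ lies in the open disk of radius $\rho(N)$. Choose $q_{0}\in(\rho(N),1)$ and let $\Gamma\subseteq\bbC$ be the positively oriented circle of radius $(1+q_{0})/2$ centered at the origin; then $\Gamma$ lies in the resolvent set of $P(0)$ and separates $1$ from the rest of the spectrum. By (A4) together with the resolvent identity
\[
(zI-P(t))^{-1}-(zI-P(0))^{-1}=(zI-P(t))^{-1}(P(t)-P(0))(zI-P(0))^{-1},
\]
the circle $\Gamma$ remains in the resolvent set of $P(t)$ and the resolvent is bounded in operator norm, uniformly in $z\in\Gamma$ and $t\in I$, for some sufficiently small open neighborhood $I\subseteq I_{0}$ of $0$. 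This lets me define the Riesz projection
\[
\Pi(t)=\frac{1}{2\pi i}\oint_{\Gamma}(zI-P(t))^{-1}\,dz.
\]
By upper semicontinuity of separated parts of the spectrum (and after shrinking $I$), $\Pi(t)$ is a rank-one projection, $P(t)$ has exactly one eigenvalue $\lambda(t)$ inside $\Gamma$ with $\lambda(0)=1$, and $\sigma(P(t))\setminus\{\lambda(t)\}\subseteq\{|z|<q_{0}\}$.

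Second, iterating the resolvent identity once more and using (A5), the map $t\mapsto(zI-P(t))^{-1}$ is $C^{2}$ in $t$ uniformly for $z\in\Gamma$, hence $\Pi(t)\in C^{2}(I,\mathrm{Hom}(\calB,\calB))$. Setting $\eta(t)=\Pi(t)\ind$ produces a $C^{2}$ right eigenvector of $P(t)$ for $\lambda(t)$, nonzero near $0$ since $\eta(0)=\ind\neq0$. The analogous construction with the dual family $\{P(t)^{*}\}$ on $\calB^{*}$ produces a $C^{2}$ left eigenvector $\xi_{0}(t)$ with $\xi_{0}(0)=m$, interpreted as the functional $f\mapsto m(f)$, which lies in $\calB^{*}$ since $\calB\hookrightarrow L^{\infty}$. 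Because $\langle\xi_{0}(0),\eta(0)\rangle=m(\ind)=1$, normalizing $\xi(t):=\xi_{0}(t)/\langle\xi_{0}(t),\eta(t)\rangle$ gives $\xi\in C^{2}(I,\calB^{*})$ with $\langle\xi(t),\eta(t)\rangle=1$, after which $\lambda(t)=\langle\xi(t),P(t)\eta(t)\rangle$ is automatically $C^{2}$.

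Third, set $N(t):=P(t)(I-\Pi(t))$. Since $\Pi(t)$ commutes with $P(t)$ (both are Cauchy integrals of the resolvent), the splitting $\calB=\Pi(t)\calB\oplus(I-\Pi(t))\calB$ is $P(t)$-invariant, and $\sigma(N(t))=\sigma(P(t))\setminus\{\lambda(t)\}\subseteq\{|z|<q_{0}\}$. Pick any $q\in(q_{0},1)$; after shrinking $I$, continuity of $\lambda$ gives $q<\inf_{t\in I}|\lambda(t)|$, and the spectral radius formula combined with uniform boundedness of $\Pi(t)$ yields the estimate $\|N^{n}(t)\|\le Cq^{n}$ uniformly in $t\in I$. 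The desired expansion
\[
P^{n}(t)(\cdot)=\lambda(t)^{n}\Pi(t)(\cdot)+N^{n}(t)(\cdot)=\lambda(t)^{n}\langle\xi(t),\cdot\rangle\eta(t)+N^{n}(t)(\cdot)
\]
then follows by decomposing the action of $P^{n}(t)$ on the two invariant summands and applying the rank-one formula $\Pi(t)f=\langle\xi(t),f\rangle\eta(t)$. The main obstacle is really just bookkeeping: arranging that the same neighborhood $I$ carries the nonvanishing of $\eta$, the $C^{2}$ dependence of all four objects, and the uniform spectral gap on $N(t)$. All three come from the single observation that the resolvent varies continuously (indeed $C^{2}$) in $t$ on the compact contour $\Gamma$, together with upper semicontinuity of the separated spectrum.
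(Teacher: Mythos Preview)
Your approach via the Riesz projection is the standard Kato-style proof of this perturbation result, and indeed the paper does not prove the proposition at all: it simply states that it is ``a direct implication of a standard perturbation theorem'' and refers to \cite{HeH}, Theorem~III.8. So your argument supplies substantially more detail than the paper itself.

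There is, however, a slip in your choice of contour. You take $\Gamma$ to be the circle of radius $(1+q_{0})/2$ centered at the origin. This circle \emph{encloses} the part of the spectrum lying in $\{|z|<q_{0}\}$ and \emph{excludes} the eigenvalue $1$, so the Riesz integral $\frac{1}{2\pi i}\oint_{\Gamma}(zI-P(t))^{-1}\,dz$ is the spectral projection onto the complementary invariant subspace (the one carrying $N(t)$), which has codimension one, not rank one. Consequently $\Pi(0)\ind$ would vanish rather than equal $\ind$. To get the rank-one projection onto the eigenspace of $\lambda(t)$ you should take $\Gamma$ to be a small circle about $1$, for instance of radius $(1-q_{0})/2$ centered at $1$; alternatively, keep your circle and set $\Pi(t)=I-\frac{1}{2\pi i}\oint_{\Gamma}(zI-P(t))^{-1}\,dz$. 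Once this is corrected, the remainder of your argument goes through as written.
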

Note that by assumption (A2) and by the fact that $m\left(\ind\right)=1$,
we have $\xi\left(0\right)=m$, $\eta\left(0\right)=\ind$. Defining
$\tilde{\xi}\left(t\right):=\frac{\xi\left(t\right)}{\left\langle \xi\left(0\right),\eta\left(t\right)\right\rangle }$,
$\tilde{\eta}\left(t\right)=\left\langle \xi\left(0\right),\eta\left(t\right)\right\rangle \eta\left(t\right)$
we obtain functionals $\tilde{\xi}\left(t\right)$ and eigenvectors
$\tilde{\eta}\left(t\right)$ satisfying all the conditions of proposition
\ref{prop:Perturbation theorem} in some open neighborhood $I_{1}\subseteq I_{0}$
of $0$, with the extra condition that 
\[
\left\langle \tilde{\xi}\left(0\right),\eta\left(t\right)\right\rangle =m\left(\eta\left(t\right)\right)=1.
\]
$I_{1}$ is chosen so that $\left\langle \xi\left(0\right),\eta\left(t\right)\right\rangle \neq0$
for all $t\in I_{1}$ and is non-empty by continuity and the fact
that $\left\langle \xi\left(0\right),\eta\left(0\right)\right\rangle =1$.
Thus, from now on we may and do assume that $\xi\left(t\right)$,
$\eta\left(t\right)$ of proposition \ref{prop:Perturbation theorem}
satisfy the extra condition 
\begin{equation}
\left\langle \xi\left(0\right),\eta\left(t\right)\right\rangle =m\left(\eta\left(t\right)\right)=1\label{eq: Extra condition}
\end{equation}
for all $t\in I$. Note that this implies that $m\left(\eta'\left(t\right)\right)\equiv0$. 

In what follows, we need more information on the eignevectors and
eigenvalues of $P\left(t\right)$ in $B\left(0,\delta\right)$ which
we summarize in the following lemma. 
\begin{lem}
\label{lem: Perturbation theorem 1}Let $I$,$\lambda\left(t\right)$,
$\xi\left(t\right)$, $\eta\left(t\right)$ be as in proposition \ref{prop:Perturbation theorem}
satisfying (\ref{eq: Extra condition})$ $ and let $\pi\left(t\right)\left(\cdot\right):=\left\langle \xi\left(t\right),\cdot\right\rangle \eta\left(t\right)$.
Then 
\begin{equation}
\lambda\left(t\right)=1-\sigma^{2}t^{2}+o\left(t^{2}\right)\label{eq: Expansion of eigenvalue-1}
\end{equation}
where $\sigma\geq0$, and there exists $\delta>0$ and constants $c,C>0$,
such that for all $t\in\left(-\delta,\delta\right)$
\begin{equation}
\left|\lambda\left(t\right)\right|\leq1-ct^{2}\label{eq: Expansion of eigenvalue}
\end{equation}

\begin{equation}
\left\Vert \pi\left(t\right)f-m\left(f\right)\ind\right\Vert \leq C\left|t\right|\label{eq: Expansion of projection}
\end{equation}
 and $\eta'\left(0\right)$ is a purely imaginary function. $ $ $ $\end{lem}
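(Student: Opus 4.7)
The plan is to combine the eigenvalue relation $P(t)\eta(t) = \lambda(t)\eta(t)$ from Proposition \ref{prop:Perturbation theorem} with the normalization $m(\eta(t)) \equiv 1$ from (\ref{eq: Extra condition}), to use positivity of $\hat T$ to bound $|\lambda(t)|$, and to exploit realness of $\hat T$ to force $\eta'(0)$ to be purely imaginary.

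First, I would apply $m$ to the eigenvalue equation to obtain $\lambda(t) = m(P(t)\eta(t))$. Differentiating twice at $t=0$ and using (A5) for $P'(0)$ and $P''(0)$, $\eta(0) = \ind$, the identity $m\circ\hat T = m$, (A3), and $m(\eta'(0)) = m(\eta''(0)) = 0$ (obtained by differentiating (\ref{eq: Extra condition})), a direct computation yields
\[
\lambda'(0) = 0, \qquad \lambda''(0) = -m(\varphi^2) + 2i\,m(\varphi\,\eta'(0)).
\]
The remaining task for (\ref{eq: Expansion of eigenvalue-1}) is to show that $\lambda''(0)$ is real and non-positive.

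Second, I would show $\eta'(0)$ is purely imaginary. Since $\hat T$ is a real operator (stated in Section \ref{sec:Characteristic-function-operator}) and $\overline{e^{it\varphi}} = e^{-it\varphi}$, (A1) gives $\overline{P(t)f} = P(-t)\bar f$ for every $f\in\calB$. Conjugating $P(t)\eta(t)=\lambda(t)\eta(t)$ shows $\overline{\eta(t)}$ is an eigenvector of $P(-t)$ with eigenvalue $\overline{\lambda(t)}$; by simplicity of the dominant eigenvalue of $P(-t)$ near $1$ and continuity, $\overline{\lambda(t)} = \lambda(-t)$ and $\overline{\eta(t)}$ is a scalar multiple of $\eta(-t)$. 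Applying $m$ and using $m(\eta(\pm t)) = 1$ forces the scalar to be $1$, so $\overline{\eta(t)} = \eta(-t)$. Differentiating at $0$ gives $\overline{\eta'(0)} = -\eta'(0)$. Consequently $m(\varphi\,\eta'(0))$ is purely imaginary and $\lambda''(0)$ is real.

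Third, I would bound $|\lambda(t)|$ by positivity. Since $|e^{it\varphi}f| = |f|$ pointwise, positivity of $\hat T$ gives $|P(t)f| \le \hat T|f|$, and by iteration $|P^n(t)f| \le \hat T^n|f|$. Applied to $\eta(t)$ and integrated against the $\hat T$-invariant measure $m$, this gives $|\lambda(t)|^n\,m(|\eta(t)|) \le m(|\eta(t)|)$. Because $\eta$ is continuous into $\calB\subseteq L^\infty$ and $\eta(0) = \ind$, $m(|\eta(t)|) > 0$ near $0$, so $|\lambda(t)| \le 1$ there. Combined with $\lambda'(0) = 0$ and realness of $\lambda''(0)$, Taylor expansion forces $\lambda''(0) \le 0$; writing $\lambda''(0) = -2\sigma^2$ with $\sigma\ge 0$ yields (\ref{eq: Expansion of eigenvalue-1}), and $|\lambda(t)|^2 = 1 - 2\sigma^2 t^2 + o(t^2)$ then gives (\ref{eq: Expansion of eigenvalue}) (for a positive $c$, this requires the standard Nagaev non-degeneracy $\sigma > 0$, which is the usual consequence of aperiodicity (A6) via a coboundary argument). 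Finally, (\ref{eq: Expansion of projection}) is a soft consequence of smoothness: since $\xi\in C^2(I,\calB^*)$ and $\eta\in C^2(I,\calB)$, the rank-one operator $\pi(t) = \langle\xi(t),\cdot\rangle\,\eta(t)$ is $C^1$ on $I$ with values in $\mathrm{Hom}(\calB,\calB)$, and the mean value theorem gives $\|\pi(t)-\pi(0)\|\le C|t|$; since $\pi(0)f = m(f)\ind$, this is exactly (\ref{eq: Expansion of projection}). The main obstacle is the realness of $\lambda''(0)$ in step two: it is not automatic from the perturbation theorem and crucially uses both (A1) and that $\hat T$ is real; granting this, the remaining estimates are routine.
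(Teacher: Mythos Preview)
Your argument is correct and largely parallels the paper's, but your route to ``$\eta'(0)$ is purely imaginary'' is genuinely different. The paper differentiates $P(t)\eta(t)=\lambda(t)\eta(t)$ once at $0$ to obtain $P'(0)\ind=(I-P(0))\eta'(0)$, then inverts $I-P(0)$ on $\ker m$ to get the explicit formula $\eta'(0)=\sum_{k\ge 0}\hat T^{k}\bigl(\hat T(i\varphi)\bigr)$, which is purely imaginary because $\hat T$ is a real operator. You instead deduce the symmetry $\overline{\eta(t)}=\eta(-t)$ from simplicity of the top eigenvalue together with the normalization $m(\eta(\pm t))=1$, and differentiate to get $\overline{\eta'(0)}=-\eta'(0)$. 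Your approach is slicker and avoids the Neumann series, at the cost of needing that conjugation is continuous on $\calB$ (so that $t\mapsto\overline{\eta(t)}$ is differentiable); this is not stated in (A1) but follows from the closed graph theorem. The paper's approach gives an explicit expression for $\eta'(0)$ which you do not obtain, but that expression is never used elsewhere.

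One point to correct: your parenthetical claim that $\sigma>0$ follows from aperiodicity (A6) ``via a coboundary argument'' is not what the paper does, and is not obviously true in this generality. The paper's proof of the lemma establishes only $\sigma\ge 0$ (exactly as you do), and the strict positivity needed for \eqref{eq: Expansion of eigenvalue} with $c>0$ is obtained later, in Section~\ref{sec:Main theorem}, as a consequence of assumption (A7) (non-degenerate functional CLT). Aperiodicity controls $\rho(P(t))$ for $t\neq 0$, not the second derivative of $\lambda$ at $0$; the standard dichotomy is that $\sigma=0$ iff $\varphi$ is an $L^2$-coboundary, which is a separate hypothesis from (A6). So your proof of the lemma is complete modulo the same caveat the paper carries: \eqref{eq: Expansion of eigenvalue} with $c>0$ presupposes $\sigma>0$.
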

\begin{proof}
Since $\varphi\left(0\right)=m$ and $\eta\left(0\right)=\ind$, (\ref{eq: Expansion of projection})
follows immediately from Taylor's expansion of $\pi\left(t\right)$
at $0$. To prove the other assertions write 
\begin{eqnarray*}
\lambda\left(t\right) & = & m\left(P\left(t\right)\eta\left(t\right)\right)\\
 & = & m\left(\hat{T}\left(e^{it\varphi}\eta\left(t\right)\right)\right)\\
 & = & m\left(e^{it\varphi}\eta\left(t\right)\right)
\end{eqnarray*}
Now $\left|e^{it\varphi}-1-it\varphi+\frac{t^{2}\varphi^{2}}{2}\right|\leq\left(\varphi t\right)^{2}\cdot\min\left(2,\left|t\varphi\right|\right)$.
$ $Since $\varphi\in L^{2}\left(m\right)$, by dominated convergence
we have $\frac{1}{t^{2}}m\left(\varphi^{2}t^{2}\cdot\min\left(2,\left|t\varphi\right|\cdot\eta\left(t\right)\right)\right)\underset{t\rightarrow0}{\longrightarrow}0$.
This implies,
\[
\lambda\left(t\right)=1+m\left(\left(it\varphi-\frac{t^{2}\varphi^{2}}{2}\right)\eta\left(t\right)\right)+o\left(t^{2}\right).
\]
By Taylor's expansion $\eta\left(t\right)=\ind+\eta'\left(0\right)t+\zeta\left(t\right)$,
where $\left\Vert \zeta\left(t\right)\right\Vert =o\left(\left|t\right|\right)$
and therefore, $\left\Vert \zeta\left(t\right)\right\Vert _{\infty}=o\left(\left|t\right|\right)$.
Thus, since $m\left(\varphi\right)=0$, $\varphi^{2}\in L^{2}\left(m\right)$,
\[
\lambda\left(t\right)=1-m\left(\varphi^{2}\right)\frac{t^{2}}{2}+m\left(\varphi\eta'\left(0\right)\right)it^{2}+o\left(t^{2}\right).
\]
It follows that $\lambda'\left(0\right)=0$. Note that $\varphi\eta'\left(0\right)$
is $m$ integrable because $\eta'\left(0\right)\in\mathcal{B}\subseteq L^{\infty}\left(m\right)$. 

We prove that $\lambda''\left(0\right)\in\bbR$. Write 
\begin{eqnarray*}
\lambda\left(t\right)\eta\left(t\right) & = & P\left(t\right)\eta\left(t\right)\\
 & = & \hat{T}\left(e^{it\varphi}\eta\left(t\right)\right)\\
 & = & \overline{\hat{T}\left(e^{-it\varphi}\overline{\eta\left(t\right)}\right)}.
\end{eqnarray*}
It follows that $P\left(-t\right)\overline{\eta\left(t\right)}=\overline{\lambda\left(t\right)\eta\left(t\right)}$,
and therefore, $\overline{\lambda\left(t\right)}$ is an eigenvalue
of $P\left(-t\right)$. Since by the perturbation theorem $P\left(-t\right)$
has a unique main eigenvalue, and all other eigenvalues are bounded
away from $\inf_{t\in I}\lambda\left(t\right)$, it follows that for
$t\in I$, $\lambda\left(-t\right)=\overline{\lambda\left(t\right)}$.
This implies that $\lambda''\left(0\right)$ is real. $ $ 

We prove that $\lambda''\left(0\right)\leq0$. The following reasoning
is based on the pointwise inequality $\hat{T}^{n}f\leq\hat{T}^{n}\left|f\right|$.
\begin{eqnarray*}
\left|\lambda^{n}\left(t\right)\eta\left(t\right)\right| & = & \left|P^{n}\left(t\right)\eta\left(t\right)\right|=\left|\hat{T}^{n}\left(e^{it\varphi}\eta\left(t\right)\right)\right|\leq\left|\hat{T}^{n}\left(\left|e^{it\varphi}\eta\left(t\right)\right|\right)\right|\\
 & = & m\left(\left|\eta\left(t\right)\right|\right)\ind+N^{n}\left(\left|\eta\left(t\right)\right|\right).
\end{eqnarray*}
Since $\left\Vert N^{n}\right\Vert \longrightarrow0$ and $\mathcal{B}$
is continuously embedded in $L^{\infty}\left(m\right)$, we have 
\[
\left|\lambda^{n}\left(t\right)\right|\left\Vert \eta\left(t\right)\right\Vert _{\infty}\leq m\left(\left|\eta\left(t\right)\right|\right)+\left\Vert N^{n}\left(\left|\eta\left(t\right)\right|\right)\right\Vert _{\infty}\longrightarrow m\left(\left|\eta\left(t\right)\right|\right).
\]
This implies $\left|\lambda\left(t\right)\right|\leq1$. Therefore,
\[
1\geq\left|\lambda\left(t\right)\right|^{2}=\lambda\left(t\right)\overline{\lambda\left(t\right)}=\left(1+\lambda''\left(0\right)t^{2}+o\left(t^{2}\right)\right)^{2},
\]
and it follows that $2\lambda''\left(0\right)\leq0$ if $\left|t\right|$
is small enough. 

We turn to prove that $\eta'\left(0\right)$ is purely imaginary. 

\[
P\left(t\right)\eta\left(t\right)=\lambda\left(t\right)\eta\left(t\right)
\]
implies
\begin{eqnarray*}
P'\left(0\right)\eta\left(0\right)+P\left(0\right)\eta'\left(0\right) & = & \lambda'\left(0\right)\eta\left(0\right)+\lambda\left(0\right)\eta'\left(0\right)\\
 & = & \eta'\left(0\right)
\end{eqnarray*}
where the last equality follows from $\lambda\left(0\right)=1$, $\lambda'\left(0\right)=0$.
Since $\eta\left(0\right)=\ind$ we obtain 
\[
P'\left(0\right)\ind=\left(I-P\left(0\right)\right)\eta'\left(0\right).
\]

Now, by (\ref{eq: Extra condition}) $\left\langle \xi\left(0\right),\eta'\left(0\right)\right\rangle =m\left(\eta'\left(0\right)\right)=0$.
By the perturbation theorem, $P\left(0\right)$ restricted to the
space $\xi\left(0\right)^{\perp}:=\left\{ v\in\calB:\left\langle \xi\left(0\right),v\right\rangle =0\right\} $
satisfies $P\left(0\right)=N\left(0\right)$ and $I-N\left(0\right)$
is invertible on this space with inverse given by $\left(I-N\left(0\right)\right)^{-1}=\sum_{k=0}^{\infty}N^{k}\left(0\right)=\sum_{k=0}^{\infty}P^{k}\left(0\right)_{|\xi\left(0\right)^{\perp}}$.
Since $P'\left(0\right)\ind=\hat{T}\left(i\varphi\right)$ is purely
imaginary and $m\left(\varphi\right)=0$ implies that $P'\left(0\right)\ind\in\xi\left(0\right)^{\perp}$,
we have 
\[
\eta'\left(0\right)=\left(\sum_{k=0}^{\infty}P^{k}\left(0\right)\right)\left(P'\left(0\right)\ind\right)
\]
is purely imaginary as claimed. \end{proof}
\begin{rem*}
Note that only first order differentiability of $P\left(t\right)$
was used in the previous theorem. Nevertheless, we will use derivatives
of second order in \ref{prop: Potential kernel }. \end{rem*}
\begin{lem}
\label{lem: Exponential decay on compacts}Let $K\subseteq\bbR$ be
a compact set such that $0\notin K$. Then under assumptions (A1),
(A4), (A6), there exist constants $C>0$, $0<r<1$ such that for all
$t\in K$ we have $\left\Vert P^{n}\left(t\right)\right\Vert \leq Cr^{n}$. \end{lem}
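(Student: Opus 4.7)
The plan is to leverage aperiodicity (A6) to get \emph{local} exponential decay at each $t\in K$, then promote it to uniform decay via compactness and continuity (A4).

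First, fix $t\in K$. Since $0\notin K$, (A6) gives $\rho(P(t))<1$, so by Gelfand's formula there exist $n(t)\in\bbN$ and $r(t)\in(0,1)$ with $\|P^{n(t)}(t)\|\le r(t)^{n(t)}$. By (A4), the map $s\mapsto P(s)$ is continuous into $\mathrm{Hom}(\calB,\calB)$; since operator composition is jointly continuous, $s\mapsto P^{n(t)}(s)$ is continuous as well. Choosing $\tilde r(t)\in(r(t),1)$, there is an open neighborhood $V(t)\ni t$ such that
\[
\|P^{n(t)}(s)\|\le \tilde r(t)^{\,n(t)}\qquad\text{for all }s\in V(t).
\]

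Next, by compactness of $K$, finitely many such neighborhoods $V(t_1),\dots,V(t_k)$ cover $K$. Put $n_j:=n(t_j)$, $r:=\max_j\tilde r(t_j)<1$, and $N_*:=\max_j n_j$. Then for every $s\in K$ there is some $j$ with $s\in V(t_j)$ and $\|P^{n_j}(s)\|\le r^{n_j}$. Continuity of $P(\cdot)$ together with compactness of $K$ also yields $M:=\sup_{s\in K}\|P(s)\|<\infty$.

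Finally, given $n\ge 1$ and $s\in K\cap V(t_j)$, write $n=q\,n_j+\ell$ with $0\le\ell<n_j$. Using submultiplicativity,
\[
\|P^n(s)\|\;\le\;\|P^{n_j}(s)\|^{q}\,\|P(s)\|^{\ell}\;\le\; r^{qn_j}M^{\ell}\;=\; r^{n}\cdot (M/r)^{\ell}\;\le\; r^{n}\cdot\bigl(\max(M/r,1)\bigr)^{N_*},
\]
so setting $C:=\bigl(\max(M/r,1)\bigr)^{N_*}$ gives the claimed bound $\|P^n(s)\|\le Cr^n$ uniformly for $s\in K$.

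The only mildly nontrivial point is the uniformization step: pointwise $\rho(P(t))<1$ is not by itself enough (spectral radius is only upper semicontinuous), so one must combine Gelfand's formula with the continuity of the $n_j$-th power to get neighborhood-wise bounds, and then handle arbitrary $n$ via Euclidean division against the finitely many local exponents $n_j$. Everything else is essentially bookkeeping.
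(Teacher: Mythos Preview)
Your proof is correct. The route differs from the paper's: the paper first invokes upper semi-continuity of the spectral radius (composed with the continuous map $t\mapsto P(t)$) to obtain a uniform bound $\sup_{t\in K}\rho(P(t))<r<1$, and then appeals to Gelfand's formula to conclude $\|P^n(t)\|\le (r+\epsilon)^n$ ``for $n$ large enough''; the passage from this pointwise statement to a uniform constant $C$ is left implicit. You bypass the spectral-radius semi-continuity entirely and work directly with continuity of $s\mapsto P^{n(t)}(s)$: you extract a local $n_j$-step contraction in a neighborhood of each $t_j$, pass to a finite subcover, and then handle arbitrary $n$ by Euclidean division against the finitely many local periods $n_j$, tracking the constant $C=(\max(M/r,1))^{N_*}$ explicitly. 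The payoff of your approach is that the uniformity in $t$ of the constant $C$ is fully transparent, whereas the paper's argument, though shorter, leaves exactly this uniformization step to the reader. (Your closing remark is slightly imprecise: upper semi-continuity of $\rho$ \emph{is} enough to get $\sup_K\rho<1$; what it does not by itself give is the uniform constant in front of $r^n$, which is what your Euclidean-division step supplies.)
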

\begin{proof}
Since the spectral radius is an upper semi-continuous function, by
(A6) there exists $r<1$, such that $\sup_{t\in K}\rho\left(P\left(t\right)\right)<r<1$.
It follows from Gelfand's formula for the spectral radius of an operator
that $r\geq\rho\left(P\left(t\right)\right)=\lim_{n\rightarrow\infty}\sqrt[n]{\left\Vert P^{n}\left(t\right)\right\Vert }$.
Thus for $\epsilon$ such that $r+\epsilon<1$, we have $\left\Vert P^{n}\left(t\right)\right\Vert \leq\left(r+\epsilon\right)^{n}$
if $n$ is large enough. The conclusion of the lemma follows from
this. 
\end{proof}

\section{\label{sec:Main theorem}Statement of the main theorem}

Recall that for a sequence of random variables $\left(X_{n}\right)$
taking values in a complete and separable metric space $\left(M,d\right)$
converges in distribution (or in law) to $X$ if for every continuous
and bounded $f:M\rightarrow\bbR$ 
\[
E\left(f\left(X_{n}\right)\right)\longrightarrow E\left(f\left(X\right)\right).
\]
In this case we denote $X_{n}\limdist X$. 

Let $\left(X,\mathcal{C},m,T\right)$ a probability preserving system
and $\varphi:X\rightarrow\bbR$ a measurable function. Assume that
assumptions (A1)-(A6) are satisfied and let $X_{n}$, $S_{n}$ be
defined by (\ref{eq: Definition of rvs}). By proposition \ref{prop:Perturbation theorem}
and lemma \ref{lem: Perturbation theorem 1} 
\begin{eqnarray*}
m\left(e^{it\frac{S_{n}}{\sqrt{n}}}\right) & = & m\left(P^{n}\left(\frac{t}{\sqrt{n}}\right)\ind\right)\\
 & = & \lambda^{n}\left(\frac{t}{\sqrt{n}}\right)+m\left(N^{n}\left(\frac{t}{\sqrt{n}}\right)\ind\right)\\
 & = & \left(1-\sigma\frac{t^{2}}{n}+o\left(\frac{t^{2}}{n}\right)\right)^{n}+m\left(N^{n}\left(t\right)\ind\right)
\end{eqnarray*}
Therefore, $\lim_{n\rightarrow\infty}m\left(e^{it\frac{S_{n}}{\sqrt{n}}}\right)=e^{-\sigma t^{2}}$.
Thus, setting $a^{2}=\frac{\sigma}{2}$ it follows that $\frac{S_{n}}{\sqrt{n}}$
converges in distribution to the Gaussian distribution with mean $0$
and variance $a^{2}$ ((A6) is not used for the central limit theorem).
Note that the limit is degenerate if and only $\lambda''\left(0\right)=\sigma=0$. 

Let 
\[
\omega_{n}\left(t\right):=\frac{1}{\sqrt{n}}\sum_{k=0}^{\left[nt\right]}S_{k},\, t\in\left[0,1\right].
\]
It is easily seen by definition that $\omega_{n}\left(t\right)$ is
a càdlàg function on $\left[0,1\right]$ (continuous from the right
with limits from the left). As stated in the introduction, we denote
by $D$ the Skorokhod space of càdlàg functions on $\left[0,1\right]$.
Recall that endowed with the Skorokhod metric which we denote by $d_{J}\left(\cdot,\cdot\right)$
(see \cite{Bil}) $D$ is complete and separable. We denote by $\omega\left(t\right)$
the Brownian motion on $\left[0,1\right]$, uniquely defined by the
equalities $E\left(\omega\left(t\right)\right)=0,$ $\forall t\in\left[0,1\right]$,
$E\left(\omega\left(1\right)^{2}\right)=\frac{\sigma}{2}$ (here $E\left(\cdot\right)$
denotes expectation with respect to the Wiener measure). It can be
easily seen by arguments similar to the above that for $0\leq t_{1}\leq...\leq t_{k}\leq1$
we have
\begin{align}
\Bigl(\omega_{n}\left(t_{1}\right),\omega_{n}\left(t_{2}\right)- & \omega_{n}\left(t_{1}\right),...,\omega_{n}\left(t_{k}\right)-\omega\left(t_{k-1}\right)\Bigr)\nonumber \\
 & \limdist\left(\omega\left(t_{1}\right),\omega\left(t_{2}\right)-\omega\left(t_{1}\right)...,\omega\left(t_{k}\right)-\omega\left(t_{k-1}\right)\right)\label{eq: Finite dim to Brownian motion}
\end{align}
 The functional central limit theorem (or the functional invariance
principle) is a statement that $\omega_{n}\limdist\omega$, where
convergence takes place in the Skorokhod space $D$. The functional
invariance principle does not seem to follow from assumptions (A1)-(A5).
To prove it, in addition to (\ref{eq: Finite dim to Brownian motion})
one has to show that the sequence $\omega_{n}$ is tight in $D$ (for
details on tightness see section \ref{sec:Estimates} or \cite{Bil}).
If in addition to (A1)-(A5) one assumes that $\varphi\in\mathcal{B}$
and $\varphi^{2}\in\mathcal{B}$, one can show that for $r<s<t$,
\[
m\left(\left(S_{\left[nt\right]}-S_{\left[ns\right]}\right)^{2}\left(S_{\left[ns\right]}-S_{\left[nr\right]}\right)^{2}\right)\leq C\left|t-r\right|^{2}.
\]
which implies tightness (see \cite{Bil}). Instead of assuming these
extra conditions, which are not required for our proof of the invariance
principle of local time we assume that the functional invariance principle
holds. In section \ref{sec:Applications-and-Examples} we provide
references for the functional invariance principle in concrete cases.
Thus we add the extra assumption:
\begin{itemize}
\item (A7) $\omega_{n}$ converges in law to $\omega$ in the space $D$,
where $\omega$ is the Brownian motion satisfying $E\left(\omega\left(t\right)\right)=0$
$\forall t\in\left[0,1\right]$, $E\left(\omega\left(1\right)^{2}\right)>0$. 
\end{itemize}
Note that by (\ref{eq: Finite dim to Brownian motion}), $\omega_{n}$
cannot converge to anything else except $\omega$, and the requirement
that $E\left(\omega\left(1\right)^{2}\right)>0$ is equivalent to
stating that the limit of $\omega_{n}$ is non-degenerate. This in
turn happens if and only if $\lambda''\left(0\right)=\sigma>0$. 

Let $f:\bbR\rightarrow\bbR$ be a smooth, integrable, symmetric function
with compactly supported Fourier transform. In what follows $\hat{f}$
denotes the Fourier transform of $f$. Let $l_{n}$ be the local time
of $\omega_{n}$ defined by 
\[
l_{n}:=l_{n}\left(x\right):=n^{-\frac{1}{2}}\sum_{k=1}^{n}f\left(S_{n}-\sqrt{n}x\right).
\]
By continuity of $f$ it follows that $m$-almost surely $l_{n}$
takes values in the space of continuous functions on $\left(-\infty,\infty\right)$
denoted by $C$. Endow $C$ with the topology of uniform convergence
on compact sets. With respect to the metric $d\left(f,g\right)=\frac{1}{2^{n}}\sup_{\left[-n,n\right]}\left\Vert f-g\right\Vert _{\infty}$
, $C$ is separable and complete. Let $l$ be the local time of the
Brownian motion $\omega$. We are now in the position to state the
main theorem:
\begin{thm}
\label{thm:Main thm}Let $\left(X,\mathcal{C},m,T\right)$ be a probability
preserving system and $\varphi:X\rightarrow\bbR$ be such that assumptions
(A1)-(A7) hold. Then the sequence $\left(\omega_{n},l_{n}\right)$
converges in law to $\left(\omega,\int_{\bbR}f\left(x\right)dx\cdot l\right)$
in the space $D\times C$. \end{thm}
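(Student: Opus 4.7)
The plan is to establish the theorem via the two standard ingredients: tightness of $(\omega_n, l_n)$ in $D\times C$, and identification of every subsequential weak limit as $(\omega,(\int f)\cdot l)$. Since (A7) supplies tightness of the first marginal in $D$, and the product of a convergent-in-law sequence with a tight sequence is tight, the main work reduces to (i) showing $(l_n)$ is tight in $C$, and (ii) identifying the joint limit. The construction of $l_n$ via a smooth $f$ with compactly supported Fourier transform is tailor-made for applying the characteristic-function operator formalism of Section~\ref{sec:Characteristic-function-operator}.

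For tightness, it suffices (by definition of the metric on $C$) to work on each compact interval $[-R,R]$ and apply Kolmogorov's criterion: I aim for an estimate of the form
\[
\E|l_n(x)-l_n(y)|^{2p}\le C_R\,|x-y|^{1+\alpha},\qquad x,y\in[-R,R],
\]
for some $p\in\bbN$, $\alpha>0$, uniformly in $n$, plus tightness at a single point. Inserting the Fourier inversion $f(S_k-\sqrt n u)=\frac{1}{2\pi}\int \hat f(t)\,e^{itS_k}e^{-it\sqrt n u}\,dt$ (the integration is over $\mathrm{supp}(\hat f)$, which is compact), and expanding the $2p$-th power, the expectation becomes an iterated integral whose integrand factors as $\prod_j \hat f(t_j)(e^{-it_j\sqrt n x}-e^{-it_j\sqrt n y})$ times $\E[\prod_j e^{it_jS_{k_j}}]$. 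After ordering $k_1\le\cdots\le k_{2p}$, the joint characteristic function equals $m\bigl(P^{k_1}(\tau_1)P^{k_2-k_1}(\tau_2)\cdots\ind\bigr)$ with $\tau_j=\sum_{i\ge j}t_i$. Substituting $t_j=s_j/\sqrt n$ brings the $P$-arguments into the perturbation neighbourhood $I$ of Proposition~\ref{prop:Perturbation theorem}, where Lemma~\ref{lem: Perturbation theorem 1} yields $|\lambda^k(\tau_j/\sqrt n)|\le e^{-c\tau_j^2 k/n}$; Gaussian integration in the $s_j$-variables then produces the H\"older gain $|x-y|^{1+\alpha}$. Whenever some $\tau_j$ lies in the part of $\mathrm{supp}(\hat f)$ bounded away from $0$, Lemma~\ref{lem: Exponential decay on compacts} contributes a geometric factor $r^k$, which is negligible upon summation.

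For identification, pass to a subsequence along which $(\omega_{n_j},l_{n_j})$ converges weakly in $D\times C$ to some $(\omega,L)$, and realize this convergence almost surely via Skorokhod's representation. To show $L=(\int f)\cdot l$ a.s., I compute the limit of mixed expectations
\[
\E\Bigl[G(\omega_n)\prod_{i=1}^{p}l_n(x_i)\Bigr],\qquad G\in C_b(D),\ x_1,\dots,x_p\in\bbR,
\]
which together with tightness determine the law of the pair. Applying the same Fourier inversion and $P$-operator factorisation as in the tightness argument and substituting $t_j=s_j/\sqrt n$, the integrand splits into the product $\prod_j\hat f(s_j/\sqrt n)\to\hat f(0)^p=(\int f)^p$, a Riemann sum in $k/n$ for $\int_0^1 e^{is_j\omega_n(u)}\,du$, and a small-parameter $P$-operator part whose dominant contribution is $\lambda^k(s_j/\sqrt n)\pi(s_j/\sqrt n)$ with limit given by the Gaussian semigroup of $\omega$. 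Passing to the limit using (A7), dominated convergence, and the spectral expansion, the expression converges to $\E[G(\omega)\prod_i(\int f)\,l(x_i)]$, which one recognises through the Fourier representation $l(x)=\frac{1}{2\pi}\int e^{-isx}\int_0^1 e^{is\omega(u)}\,du\,ds$ of the Brownian local time.

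The main obstacle is the identification step: one must carry out the multi-dimensional Fourier inversion simultaneously with the weak convergence $\omega_n\limdist\omega$ and obtain uniform-in-$n$ control of the integrand. The integration in each $s_j$ must be split into a \emph{Brownian regime} near $0$, where Proposition~\ref{prop:Perturbation theorem} and Lemma~\ref{lem: Perturbation theorem 1} replace $P^k(s/\sqrt n)$ by $\lambda^k(s/\sqrt n)\pi(s/\sqrt n)+O(q^k)$ and match with the Gaussian semigroup of $\omega$, and a \emph{remainder regime}, where the aperiodicity assumption (A6) entering via Lemma~\ref{lem: Exponential decay on compacts} produces the exponential decay needed to discard the contribution outside a small neighbourhood of $0$. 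Tightness, by contrast, is comparatively routine once the spectral estimates have been packaged.
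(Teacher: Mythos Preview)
Your tightness scheme is in outline the paper's: a fourth-moment bound $m\bigl((l_n(x)-l_n(y))^4\bigr)\le C|x-y|^2$ together with a uniform second-moment bound, fed into a chaining argument. But your sketch hides the main difficulty. The substitution $t_j=s_j/\sqrt n$ is not what drives the estimate: the $P$-arguments $\tau_j=\sum_{i\ge j}t_i$ already lie in a fixed compact set determined by $\mathrm{supp}\,\hat f$, and the relevant dichotomy is whether $\tau_j$ falls inside the perturbation window $(-\delta,\delta)$ or not. The crucial input you never name is the \emph{potential kernel estimate} (Proposition~\ref{prop: Potential kernel }),
\[
\sum_{n\ge1}\bigl|m\bigl(f(S_n)-f(S_n+y)\bigr)\bigr|\le C|y|,
\]
which is what converts each of the two ``difference'' factors $1-e^{it(x-y)}$ into a gain of $|x-y|$ after summing the corresponding time index over all of $\bbN$, while the two remaining indices contribute $\sqrt n$ each via Proposition~\ref{prop:LLT}. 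Proving this requires separating real and imaginary parts of $\lambda^n(t)$ and using that $\eta'(0)$ is purely imaginary (Lemma~\ref{lem: Perturbation theorem 1}); it is the longest argument in the paper, and ``Gaussian integration in the $s_j$'' does not by itself produce it.

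Your identification step takes a genuinely different route from the paper and is the weaker part of the proposal. The paper computes no mixed moments at all. After Skorokhod representation it compares $\int_a^b l'_n(x)\,dx$ directly with the occupation time $(\int_\bbR f)\cdot\int_0^1\ind_{[a\pm\epsilon,b\mp\epsilon]}(\omega'_n(t))\,dt$ by an elementary change of variables, and then invokes the Kesten--Spitzer fact that $h\mapsto\int_0^1\ind_{[a,b]}(h(t))\,dt$ is continuous in the Skorokhod topology at a.e.\ Brownian path. This yields $\int_a^b q'=(\int f)\int_a^b l_{p'}$ a.s.\ for all rational $a<b$, hence $q'=(\int f)\,l_{p'}$. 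Your moment route can in principle be carried through, but the sketch has gaps: for general $G\in C_b(D)$ there is no Fourier/operator expression, so you must restrict to $G=\exp\bigl(i\sum\alpha_j\omega_n(t_j)\bigr)$ and argue these determine the joint law; the products $\prod_i l_n(x_i)$ are unbounded, so uniform integrability must be extracted from higher-moment analogues of the tightness estimates; and after your substitution the $s$-domain expands to $\sqrt n\cdot\mathrm{supp}\,\hat f$, so making rigorous the formal identity $l(x)=\tfrac{1}{2\pi}\int e^{-isx}\int_0^1 e^{is\omega(u)}\,du\,ds$ demands exactly the uniform decay that the spectral machinery provides, organised with some care. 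The paper's occupation-time argument sidesteps all of this.
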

\begin{rem*}
Instead of assuming that the function $f$ has compactly supported
Fourier transform, we may assume, in addition to (A6) that $\limsup_{t\rightarrow\infty}\rho\left(P\left(t\right)\right)<1$.
This is the so called Cramer's condition on the function $\varphi$.
It allows to extend the statement of lemma \ref{lem: Exponential decay on compacts}
to non-compact intervals that are bounded away from $0$, which allows
to carry out the estimates in section \ref{sec:Estimates} without
the assumption on $f$ having compactly carried Fourier transform.
In this case the theorem would be valid for any symmetric, integrable
function $f$, and in particular for functions of the form $f=\ind_{\left(-a,a\right)}$,
where $a>0$. 
\end{rem*}
\global\long\def\newmacroname{}

\section{\label{sec:Applications-and-Examples}Applications and Examples}

The theorem is applicable for systems where one can show that the
transfer operator acts quasi-compactly on a large enough Banach space.
We briefly describe two concrete example of subshifts of finite type
and their generalization to a non-compact space via Gibbs-Markov maps
and refer the reader to \cite{ADSZ,HeH,LY,Yo} for other examples.

\subsection{Subshifts of finite type. }

We refer the reader to \cite{Bow} as a basic reference for subshifts
of finite type. Denote by $\bbN_{*}$ the set $\bbN\bigcup\left\{ 0\right\} $.
For $d\in\bbN$, let $\mathcal{S}=\left\{ 1,...,d\right\} $. Endow
$S^{\bbN_{*}}$ with the (compact) metric $d_{\theta}\left(x,y\right):=\theta^{t\left(x,y\right)}$
where $0<\theta<1$, and $t\left(x,y\right)=\min\left\{ n:\: x_{n}\neq y_{n}\right\} $,
and let $\sigma:\mathcal{S}^{\bbN}\rightarrow\mathcal{S}^{\bbN}$
be the left shift operator, defined by $\left(\sigma x\right)_{n}=x_{n+1}$.
Let $A:\mathcal{S}\times\mathcal{S}\rightarrow\left\{ 0,1\right\} $
be an irreducible, aperiodic matrix, i.e there exists some integer
$n_{0}$, such that all entries of $A^{n_{0}}$ are strictly positive
and let $\Sigma_{+}:=\left\{ x\in\mathcal{S}^{\bbN_{*}}:\, A\left(x_{i},x_{i+1}\right)=1\,\forall i\in\bbN_{*}\right\} $.
Then $\Sigma_{+}$ is a closed, shift invariant subspace of $\mathcal{S}^{\bbN_{*}}$.
Let $C\left(\Sigma_{+}\right)$ be the Banach space of all continuous
complex valued functions on $\Sigma_{+}$ endowed with the supremum
norm $\left\Vert \cdot\right\Vert _{\infty}$. Define $F_{\theta}\left(\Sigma_{+}\right)\subseteq C\left(\Sigma_{+}\right)$
to be the set of all Lipchitz continuous functions on $\Sigma_{+}$.
Endowed with the norm $\left\Vert \cdot\right\Vert =\left\Vert \cdot\right\Vert _{\infty}+\left\Vert \cdot\right\Vert _{Lip}$,
where $\left\Vert f\right\Vert _{Lip}=\sup_{x,y\in\Sigma_{+}}\frac{\left|f\left(x\right)-f\left(y\right)\right|}{d_{\theta}\left(x,y\right)}$,
$F_{\theta}\left(\Sigma_{+}\right)$ becomes a Banach space. For a
function $\phi\in F_{\theta}\left(\Sigma_{+}\right)$, there exists
a unique, $\sigma$-invariant Borel measure $m_{\phi}$, called the
Gibbs measure with respect to $\phi$ satisfying 
\[
c_{1}\leq\frac{m_{\phi}\left\{ y\in\Sigma_{+},\, x_{i}=y_{i},\, i=1,...,n\right\} }{\exp\left(-Pn+\sum_{i=0}^{n-1}\phi\left(\sigma^{i}x\right)\right)}\leq c_{2}
\]
for some constants $c_{1}>0$, $c_{2}>0$, $P$ and all $x\in\Sigma_{+}$,
$n\geq0$. 

It is clear that the Banach space $\mathcal{B}=F_{\theta}\left(\Sigma_{+}\right)$
satisfies (A1). It is a consequence of the Ruelle-Perron-Frobenius
theorem combined with the assumption of irreducibility and aperiodicity
of the matrix $A$ that the transfer operator $\hat{\sigma}$ of the
system $\left(\Sigma_{+},\mathcal{C},m_{\phi},\sigma\right)$ satisfies
(A2) (here $\mathcal{C}$ is the Borel $\sigma$-algebra on $\Sigma_{+}$).
Let $\varphi\in\mathcal{B}$ such that $m\left(\varphi\right)=0$.
Note that $\varphi\in\mathcal{B}$ implies that $\varphi$ is bounded
and therefore, has finite moments of any order. It is easy to see
that $\mathcal{B}$ is closed under multiplication. This implies that
the characteristic function operator has continuous derivatives of
any order with derivatives given by $P^{\left(k\right)}\left(t\right)\left(f\right)=\hat{\sigma}\left(i^{k}\varphi^{k}e^{it\varphi}f\right)$.
Thus, assumptions (A1)-(A5) are satisfied for $\varphi\in\mathcal{B}$. 

Assumption (A6) is equivalent to the following: for any $t\in\bbR\setminus\left\{ 0\right\} $,
$e^{it\varphi}$ is not $\sigma$-cohomologous to a constant, i.e.
the only solution to the equation 
\begin{equation}
e^{it\varphi}=\frac{\lambda f\circ\sigma}{f},\ \lambda\in\mathbb{T},\ f:\Sigma_{+}\rightarrow\mathbb{T},\ \mathbb{T}=\left\{ z\in\bbC:z=\left|1\right|\right\} \label{eq:aperiodicity}
\end{equation}
is $\lambda=1$, $f\equiv1$. A function $\varphi$ satisfying this
assumption is called aperiodic. 

Finally, (A7) fails if and only if $\varphi$ is a coboundary, i.e.
there exists $g:\Sigma_{+}\rightarrow\bbR$ measurable, such that
$\varphi=g\circ\sigma-g$. For the proof of the functional invariance
principle refer to \cite{BS}. 

As an application in ergodic theory of the invariance principle for
local time we refer the reader to \cite{Au} where it is used to prove
that the entropy of the scenery is an invariant for random walks in
random scenery processes with a subshift of finite type at the base.

\subsection{Gibbs-Markov maps. }

We refer the reader to \cite{AD} as a basic reference for Gibbs-Markov
maps. Let $\left(X,\mathcal{\mathcal{C}},m,T\right)$ be a probability
preserving transformation of a standard probability space. $T$ is
a Markov map, if there exists a countable partition $\alpha$ of $X$
such that $T\left(\alpha\right)\subseteq\sigma\left(\alpha\right)$
($\mod m$), $T$ when restrict to each element of the partition $\alpha$
is invertible and $\bigcup_{n=0}^{\infty}\left\{ T^{-n}\alpha\right\} $
generates $\mathcal{C}$ (here $\sigma\left(\alpha\right)$ is the
$\sigma$-algebra generated by $\alpha$). Write $\alpha=\left\{ a_{s}:s\in\mathcal{S}\right\} $
and endow $S^{\bbN}$ with the metric $d_{\theta}\left(x,y\right):=\theta^{t\left(x,y\right)}$
where $0<\theta<1$, and $t\left(x,y\right)=\min\left\{ n:x_{n}\neq y_{n}\right\} $.
Set $\Sigma=\left\{ s\in S^{\bbN_{*}}:\mu\left(\bigcap_{k=1}^{n}T^{-k+1}a_{s_{k}}\right)>0\,\forall n\geq1\right\} $.
Then $\Sigma$ is a closed, shift invariant subset of $\mathcal{S}^{\bbN}$
and the system $\left(X,\mathcal{C},m,T\right)$ is conjugate to $\left(\Sigma,\calB\left(\Sigma\right),m,\sigma\right)$
by the map $\left\{ \varphi\left(s_{1},s_{2},...\right)\right\} :=\bigcap_{k=0}^{\infty}T^{-k}a_{s_{k}}$,
where $\sigma$ is the left shift, and $m:=m\circ\varphi$. Thus,
we may assume that $X=\Sigma$, $\mathcal{C}=\calB\left(\Sigma\right)$,
$T=\sigma$ and $\alpha=\left\{ \left[s\right]:s\in\mathcal{S}\right\} $,
where $\left[s_{1},s_{2},...,s_{n}\right]$ denotes the cylinder $\left\{ x\in\mathcal{S}^{\bbN}:x_{i}=s_{i}\,\forall i\leq n\right\} $.
A Markov map $\left(X,\calB,m,T,\alpha\right)$ is Gibbs-Markov if
two additional properties are satisfied:
\begin{itemize}
\item (Big image property) $\inf_{a\in\alpha}m\left(Ta\right)>0$. 
\item (Bounded distortion) For $a\in\alpha$, denote by $f\left(x\right)$
the jacobian of the map $T^{-1}:Ta\rightarrow a$, i.e $f\left(x\right)=\left(\frac{dm\circ T_{|a}}{dm_{|a}}\left(x\right)\right)^{-1}$.
There exists $M>0$ such that for all $a\in\alpha$, and almost every
$x,y\in Ta$, 
\[
\left|1-\frac{f\left(x\right)}{f\left(y\right)}\right|<Md\left(x,y\right).
\]
 
\end{itemize}
A function $f:X\rightarrow\bbR$ is Lipchitz continuous on a set $A\subseteq X$
if 
\[
D_{A}\left(f\right):=\sup_{x,y\in A}\frac{f\left(x\right)-f\left(y\right)}{d\left(x,y\right)}<\infty.
\]
For a partition $ $$\tau$ of $X$ let $D_{\tau}\left(f\right):=\sup_{a\in\tau}D_{a}f$
and let $Lip_{q,\tau}$ be the space 
\[
\left\{ f\in L^{q}\left(m\right):D_{\tau}\left(f\right)<\infty\right\} 
\]
$Lip_{q,\tau}$ is a Banach space with respect to the norm $\left\Vert f\right\Vert :=\left\Vert f\right\Vert _{q}+D_{\rho}\left(f\right)$.
We consider the space $\mathcal{B}=Lip_{\infty,\beta}$ where $\beta=T\alpha$.
Clearly $\mathcal{B}$ satisfies (A1). It is shown in \cite{AD} that
if $T$ is mixing then the transfer operator $\hat{T}$ satisfies
(A2) and (A4) for $\varphi\in Lip_{2,\alpha}$, $m\left(\varphi\right)=0$.
To show that (A5) holds note that 
\begin{eqnarray*}
P\left(t\right)f-P\left(0\right)f & = & \hat{T}\left(\left(e^{it\varphi}-1\right)f\right)\\
 & = & \hat{T}\left(-i\varphi tf+\frac{\varphi^{2}t^{2}}{2}f+\varphi^{2}t^{2}\min\left(\left|\varphi t\right|,2\right)\cdot f\right)\\
 & = & \hat{T}\left(i\varphi tf\right)+\hat{T}\left(\varphi^{2}f\right).
\end{eqnarray*}
As $\left(e^{it\varphi}-1-it\varphi+\frac{\varphi^{2}t^{2}}{2}\right)\leq\varphi^{2}t^{2}\min\left(\left|\varphi t\right|,2\right)$
we have that 
\[
\hat{T}\left(\left(e^{it\varphi}-1\right)f\right)=\hat{T}\left(-i\varphi tf\right)+\hat{T}\left(\varphi^{2}t^{2}f\right)+o\left(t^{2}\right).
\]
Proposition 1.4 of \cite{AD} shows that $\hat{T}:Lip_{1,\beta}\rightarrow\mathcal{B}$.
Therefore, $\left\Vert \hat{T}\left(-i\varphi tf\right)\right\Vert <\infty$,
$\left\Vert \hat{T}\left(\varphi^{2}f\right)\right\Vert <\infty$
and assumption (A5) follows from this. As in the case of subshifts
of finite type (A6) holds if and only the only solutions to (\ref{eq:aperiodicity})
are $\lambda=1$, $f\equiv1$. The functional invariance principle
for Gibbs Markov maps follows from a stronger, almost sure invariance
principle proved for example in \cite{Gou} for observables in $L^{p}$
with $p>2$.

\section{Estimates\label{sec:Estimates}}

In this section we obtain the main estimates, used in the proof of
theorem \ref{thm:Main thm}. Henceforth we assume that assumptions
(A1)-(A7) hold and use the notation introduced in section \ref{sec:Characteristic-function-operator}.
In proofs throughout this section, we use the notation $a\apprle b$
to mean that there exists a constant $C$ such that $a\leq Cb$. 
\begin{prop}
\label{prop:LLT}There exists a constant $C$ such that for all $n\in\bbN$,
$m\left(f\left(S_{n}-x\right)\right)\leq\frac{C}{\sqrt{n}}$, $\int_{\bbR}\left|\lambda^{n}\left(t\right)\right|\leq\frac{C}{\sqrt{n}}$. \end{prop}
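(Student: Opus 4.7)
Both estimates flow from the perturbation expansion of Lemma \ref{lem: Perturbation theorem 1} near $t=0$. The first is a direct Gaussian-type integration; the second is a standard local limit theorem Fourier inversion, broken into regimes according to the magnitude of the frequency variable.

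For the integral bound, I would split the domain of definition $I$ of $\lambda$ as $I=(-\delta,\delta)\cup(I\setminus(-\delta,\delta))$, with $\delta$ from Lemma \ref{lem: Perturbation theorem 1}. On $(-\delta,\delta)$ the estimate $|\lambda(t)|\le 1-ct^2\le e^{-ct^2}$ gives
\[
\int_{-\delta}^{\delta}|\lambda(t)|^n\,dt\le\int_{\mathbb R}e^{-cnt^2}\,dt=\sqrt{\pi/(cn)},
\]
so this region contributes $O(1/\sqrt n)$. On the complementary compact piece, $|\lambda(t)|$ is continuous and by (A6) strictly less than $1$, hence bounded by some $r<1$, yielding an exponentially small contribution.

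For $m(f(S_n-x))$, the compact support of $\hat f$ in some interval $[-K,K]$ combined with Fourier inversion and the identity $m(e^{itS_n})=m(P^n(t)\ind)$ recalled in Section \ref{sec:Characteristic-function-operator} gives
\[
m(f(S_n-x))=\frac{1}{2\pi}\int_{-K}^{K}\hat f(t)e^{-itx}\,m\bigl(P^n(t)\ind\bigr)\,dt.
\]
I would split this integral at $|t|=\delta$. On $|t|<\delta$, Proposition \ref{prop:Perturbation theorem} together with $m(\eta(t))=1$ yields
\[
m\bigl(P^n(t)\ind\bigr)=\lambda^n(t)\langle\xi(t),\ind\rangle+m\bigl(N^n(t)\ind\bigr);
\]
the first summand contributes at most $C\int_{-\delta}^{\delta}|\lambda(t)|^n\,dt\lesssim 1/\sqrt n$ (using boundedness of $\hat f$ and continuity of $\langle\xi(t),\ind\rangle$), while $\|N^n(t)\|\le Cq^n$ renders the second summand exponentially small. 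On $\delta\le|t|\le K$, Lemma \ref{lem: Exponential decay on compacts} applied to the compact set gives $\|P^n(t)\|\le Cr^n$ uniformly, again exponentially small. Since $|e^{-itx}|=1$, the resulting bound $C/\sqrt n$ is uniform in $x$.

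The main obstacle is purely technical bookkeeping across three frequency regimes: Gaussian decay of $\lambda^n(t)$ near $0$, exponential decay of the subdominant term $N^n(t)$, and uniform exponential decay of $\|P^n(t)\|$ on compact sets bounded away from $0$. The compact support assumption on $\hat f$ (replaceable by Cramér's condition, as noted in the remark after Theorem \ref{thm:Main thm}) is precisely what allows the outer region $|t|\ge K$ to be discarded.
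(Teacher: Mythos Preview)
Your proposal is correct and follows essentially the same route as the paper: Fourier inversion, splitting at $|t|=\delta$, Gaussian bound $\int(1-ct^2)^n\,dt\lesssim n^{-1/2}$ on the inner piece via Lemma \ref{lem: Perturbation theorem 1}, exponential decay of $N^n(t)$, and Lemma \ref{lem: Exponential decay on compacts} on the outer compact piece. The only cosmetic difference is that the paper bounds via the operator norm $\|P^n(t)\|\le|\lambda^n(t)|\,\|\pi(t)\|+\|N^n(t)\|$ rather than evaluating $m(P^n(t)\ind)$ directly, and it obtains the Gaussian integral by the change of variables $t\mapsto t/\sqrt n$ instead of the pointwise inequality $1-ct^2\le e^{-ct^2}$; both yield the same estimate.
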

\begin{proof}
Let $\delta$ be as in lemma \ref{lem: Perturbation theorem 1} and
set $C_{\delta}:=\left(-\delta,\delta\right)$, $\bar{C}_{\delta}=\bbR\setminus\left(-\delta,\delta\right)$.
By inversion formula for Fourier transform and by definition of the
characteristic function operator, we have
\begin{eqnarray*}
m\left(f\left(S_{n}-x\right)\right) & = & m\left(\int_{\bbR}\hat{f}\left(t\right)e^{it\left(S_{n}-x\right)}dt\right)\\
 & = & \int_{\bbR}\hat{f}\left(t\right)P^{n}\left(t\right)\left(\ind\right)e^{-itx}dt\\
 & \leq & \int_{C_{\delta}}\left|\hat{f}\left(t\right)\right|\left\Vert P^{n}\left(t\right)\right\Vert dt+\int_{\bar{C}_{\delta}}\left|\hat{f}\left(t\right)\right|\left\Vert P^{n}\left(t\right)\right\Vert dt.
\end{eqnarray*}
Since $\hat{f}\left(t\right)$ has compact support, by lemma \ref{lem: Exponential decay on compacts},
the second term exponentially tends to $0$. We estimate the first
term. By the expansion of the characteristic function operator, 
\begin{eqnarray*}
\int\limits _{C_{\delta}}\left|\hat{f}\left(t\right)\right|\left\Vert P^{n}\left(t\right)\right\Vert dt & \leq & \int\limits _{C_{\delta}}\left|\hat{f}\left(t\right)\right|\left(\left|\lambda^{n}\left(t\right)\right|\left\Vert \pi\left(t\right)\right\Vert +\left\Vert N^{n}\left(t\right)\right\Vert \right)dt\\
 & \apprle & \int\limits _{C_{\delta}}\left(1-ct^{2}\right)^{n}dt+\int\limits _{C_{\delta}}\left\Vert N^{n}\left(t\right)\right\Vert dt.
\end{eqnarray*}
Since $\left\Vert N^{n}\left(t\right)\right\Vert $ exponentially
tends to $0$, the assertion is satisfied for the second term. 

Changing variables $x=\frac{t}{\sqrt{n}}$ in the first term we get
\begin{eqnarray*}
\int\limits _{C_{\delta}}\left(1-ct^{2}\right)^{n}dt & = & \frac{1}{\sqrt{n}}\int\limits _{-\sqrt{n}\delta}^{\sqrt{n}\delta}\left(1-c\frac{x^{2}}{n}\right)^{n}dx\\
 & \leq & \frac{1}{\sqrt{n}}\int\limits _{-\infty}^{\infty}e^{-cx^{2}}dx\apprle\frac{1}{\sqrt{n}}.
\end{eqnarray*}
Thus $m\left(f\left(S_{n}-x\right)\right)\leq\frac{C}{\sqrt{n}}$
for some $C>0$. The proof that $m\left(\left|\lambda^{n}\left(t\right)\right|\right)\leq\frac{C}{\sqrt{n}}$
is contained in the above proof.\end{proof}
\begin{prop}
\label{prop: Potential kernel }(Potential Kernel Estimate) There
exists a constant $C>0$, such that for all $y\in\bbR$, 
\[
\sum_{n=1}^{\infty}\left|m\left(f\left(S_{n}\right)-f\left(S_{n}+y\right)\right)\right|\leq C\left|y\right|.
\]
\end{prop}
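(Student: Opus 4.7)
My plan is to combine Fourier inversion with the spectral decomposition of Proposition \ref{prop:Perturbation theorem}. By Fourier inversion,
\[
m(f(S_n) - f(S_n+y)) = \int_{\mathbb{R}} \hat f(t)(1 - e^{ity})\, m(P^n(t)\ind)\,dt.
\]
Choose $\delta$ as in Lemma \ref{lem: Perturbation theorem 1}. On $\mathrm{supp}(\hat f) \cap \{|t|>\delta\}$, which is compact and bounded away from $0$, Lemma \ref{lem: Exponential decay on compacts} gives $\|P^n(t)\|\leq Cr^n$ for some $r \in (0,1)$; combined with $|1-e^{ity}|\leq \min(2,|ty|)$ this part contributes at most $C|y|$ to the sum over $n$. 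On $|t|\leq \delta$, Proposition \ref{prop:Perturbation theorem} gives $m(P^n(t)\ind) = \lambda^n(t) L(t) + m(N^n(t)\ind)$ with $L(t) := \langle \xi(t),\ind\rangle$, and the $N^n$-contribution (decaying like $q^n$) sums similarly.

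The main term is $A_n := \int_{|t|\leq\delta}\hat f(t)(1-e^{ity})\lambda^n(t)L(t)\,dt$. Since $|\lambda^n(t)|\leq e^{-cnt^2}$ by Lemma \ref{lem: Perturbation theorem 1}, I distinguish two regimes. For $n\leq y^2$, the trivial bound $|1-e^{ity}|\leq 2$ together with Gaussian moments gives $|A_n|\leq C/\sqrt n$, so $\sum_{n\leq y^2} n^{-1/2}\leq 2|y|$. For $n > y^2$, I aim to obtain $|A_n|\leq Cy^2/n^{3/2}$, which sums (via $\sum_{n>N}n^{-3/2}\leq 2/\sqrt N$) to at most $2|y|$.

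To derive $|A_n|\leq Cy^2/n^{3/2}$ for $n > y^2$, decompose $1-e^{ity} = (1-\cos(ty)) - i\sin(ty)$ and exploit the parities: $\hat f$ is real and even (since $f$ is symmetric), while $\lambda^n(-t)L(-t) = \overline{\lambda^n(t)L(t)}$ because $P(-t) = \overline{P(t)}$ and the principal eigenpair is unique up to conjugation. Isolating the surviving parity components,
\[
A_n = \int \hat f(t)(1-\cos(ty))\,\mathrm{Re}(\lambda^n(t) L(t))\,dt + \int \hat f(t)\sin(ty)\,\mathrm{Im}(\lambda^n(t) L(t))\,dt.
\]
The cosine integral satisfies $|1-\cos(ty)|\leq t^2y^2/2$, giving directly $\leq Cy^2/n^{3/2}$. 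For the sine integral I would exploit the crucial vanishing $L'(0) = \langle \xi'(0),\ind\rangle = -\langle \xi(0),\eta'(0)\rangle = -m(\eta'(0)) = 0$ (obtained by differentiating the normalization condition (\ref{eq: Extra condition}) at $t=0$), which together with $\lambda'(0)=0$ forces $\mathrm{Im}(\lambda^n(t) L(t))$ to vanish faster than linearly at the origin; combined with $|\mathrm{Im}(\lambda^n L)|\leq Ce^{-cnt^2}$ and a rescaling $u = t\sqrt n$, this yields the matching $Cy^2/n^{3/2}$ bound.

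The hard part is the sine-integral estimate for $n > y^2$: without exploiting $L'(0) = 0$ to eliminate the linear-in-$t$ contribution to $\mathrm{Im}(\lambda^n L)$ near the origin, the naive estimate via $|\sin(ty)|\leq|ty|$ yields only $C|y|/n$, which is not summable. The required cancellation must be extracted under only the $C^2$ regularity of $\lambda$ and $L$ guaranteed by (A5) and the perturbation theorem, demanding a careful combination of the near-$0$ Taylor expansion with the Gaussian spectral decay $|\lambda^n(t)|\leq e^{-cnt^2}$.
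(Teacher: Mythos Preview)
Your overall architecture---Fourier inversion, splitting at $|t|=\delta$, peeling off the $N^n$ and far-from-zero contributions, and isolating the real/imaginary parts via parity---matches the paper's. The gap is in the sine integral, which you correctly flag as the hard part but do not actually control.

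First, the target bound $|A_n^{\sin}|\le C y^2/n^{3/2}$ cannot be right: $\sin(ty)$ supplies at most one factor of $|y|$ (via $|\sin(ty)|\le|ty|$), so the best you could hope for is $C|y|/n^{1+\varepsilon}$ for some $\varepsilon>0$. More seriously, the vanishing you invoke---$\lambda'(0)=0$, $L'(0)=0$, $\lambda''(0)\in\bbR$---only yields $\Im\lambda(t)=o(t^2)$ under the $C^2$ regularity of (A5), not $O(|t|^{2+\varepsilon})$. Using $|\Im\lambda^n|\le n|\lambda|^{n-1}|\Im\lambda|$ and the rescaling $u=t\sqrt n$ you propose, one gets
\[
\int_{|t|\le\delta}|\sin(ty)|\,|\Im(\lambda^n L)|\,dt \;\le\; \frac{C|y|}{n}\int_0^{\delta\sqrt n} u^3 e^{-cu^2}\,\frac{\Im\lambda(u/\sqrt n)}{(u/\sqrt n)^2}\,du \;=\; o\!\left(\frac{|y|}{n}\right),
\]
which is not summable in $n$. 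The same obstruction prevents any uniform-in-$n$ pointwise bound $|\Im(\lambda^n L)(t)|\le C t^2 e^{-c'nt^2}$: the second derivative of $\lambda^n L$ away from $0$ carries $n$-dependent terms, so the Taylor remainder is not uniformly $O(t^2)$.

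The paper avoids this by \emph{summing over $n$ first}, converting $\sum_n n|\lambda|^{n-1}$ into $1/(ct^2)^2$, and then---crucially---not relying on abstract $C^2$ regularity but on the concrete identity $\Im\lambda(t)=m(\sin(t\varphi))+O(|t|^3)$ (the $O(|t|^3)$ coming from $\eta'(0)$ being purely imaginary and the second-order differentiability). The term $m(\sin(t\varphi)-t\varphi)$ is then split according to $\{|t\varphi|\le 1\}$ and $\{|t\varphi|>1\}$, and Fubini plus $\varphi\in L^2(m)$ tames the $1/t^4$ singularity. Your argument never touches $m(\sin(t\varphi))$ or the hypothesis $\varphi\in L^2$, and without them the sine contribution cannot be closed under (A5) alone.
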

\begin{proof}
By the inversion formula for Fourier transform,
\begin{align*}
\left|m\left(f\left(S_{n}\right)-f\left(S_{n}+y\right)\right)\right| & =\left|m\left(\Re\int_{\bbR}\hat{f}\left(t\right)e^{itS_{n}}\left(1-e^{ity}\right)dt\right)\right|\\
 & =\left|\Re\int_{\bbR}\hat{f}\left(t\right)m\left(P^{n}\left(t\right)\left(\ind\right)\right)\left(1-e^{ity}\right)dt\right|
\end{align*}
where the first equality follows since the left side is real and the
second inequality is valid by Fubini's theorem. By proposition \ref{prop:Perturbation theorem}
and lemma \ref{lem: Perturbation theorem 1} there exist a $\delta>0$
such that for every $t\in\left(-\delta,\delta\right)$, 
\[
P\left(t\right)\left(\cdot\right)=\lambda\left(t\right)\pi\left(t\right)\left(\cdot\right)+N\left(t\right)\left(\cdot\right)
\]
where $\left|\lambda\left(t\right)\right|\leq1-ct^{2}$ for some positive
constant $c$, the spectral radius of $N\left(t\right)$ satisfies
$\rho\left(N\left(t\right)\right)\leq q<1$ for all $t\in\left(-\delta,\delta\right)$,
and $\pi\left(t\right)=m\ind+\zeta\left(t\right)$ with $\left\Vert \zeta\left(t\right)\right\Vert \leq Ct$
for some $C\geq0$. Write $C_{\delta}=\left(-\delta,\delta\right)$
and $\bar{C}_{\delta}=\bbR\setminus\left(-\delta,\delta\right)$.
Then 
\begin{eqnarray}
\left|\Re\int_{\bbR}\hat{f}\left(t\right)m\left(P^{n}\left(t\right)\ind\right)\left(1-e^{ity}\right)dt\right| & \leq & \left|\Re\int_{C_{\delta}}\hat{f}\left(t\right)m\left(P^{n}\left(t\right)\ind\right)\left(1-e^{ity}\right)dt\right|\label{eq:Estimation 1 - potential kerenel}\\
 &  & +\left|\int_{\bar{C}_{\delta}}\hat{f}\left(t\right)m\left(P^{n}\left(t\right)\ind\right)\left|1-e^{ity}\right|dt\right|\nonumber 
\end{eqnarray}
Since the support of $\hat{f}$ is compact by lemma \ref{lem: Exponential decay on compacts}
there exists $0<r<1$, such that $\left\Vert P^{n}\left(t\right)\right\Vert \apprle r^{n}$
on $\bar{C}_{\delta}$. This, together with $\left|1-e^{ity}\right|\apprle\left|y\right|$
implies 
\[
\sum_{n=1}^{\infty}\left|\int_{\bar{C}_{\delta}}\hat{f}\left(t\right)m\left(P^{n}\left(t\right)\ind\right)\left|1-e^{ity}\right|dt\right|\apprle\frac{\left|y\right|}{1-r}\apprle\left|y\right|.
\]
 To bound the right hand side of (\ref{eq:Estimation 1 - potential kerenel})
use the expansion of the characteristic function operator to get

\begin{eqnarray}
\left|\Re\int_{C_{\delta}}\hat{f}\left(t\right)m\left(P^{n}\left(t\right)\ind\right)\left(1-e^{ity}\right)dt\right| & \leq & \left|\Re\int_{C_{\delta}}\hat{f}\left(t\right)\lambda^{n}\left(t\right)m\left(\pi\left(t\right)\ind\right)\left(1-e^{ity}\right)dt\right|\nonumber \\
 &  & +\int_{C_{\delta}}\left|\hat{f}\left(t\right)\right|\left\Vert N^{n}\left(t\right)\right\Vert \left|1-e^{ity}\right|dt.\label{eq: potential kernel 3}
\end{eqnarray}
Since $\rho\left(N\left(t\right)\right)\leq q<1$, and $\left|1-e^{ity}\right|\apprle\left|y\right|$,
\[
\sum_{n=1}^{\infty}\int_{C_{\delta}}\left|\hat{f}\left(t\right)\right|\left\Vert N\left(t\right)\right\Vert ^{n}\left|1-e^{ity}\right|dt\apprle\left|y\right|.
\]
 We turn to analyze the first term on the right hand side of the inequality
(\ref{eq: potential kernel 3}). Since $\hat{f}\left(t\right)$ is
real valued because $f$ is symmetric,

\begin{align}
\biggl|\Re\int_{C_{\delta}}\hat{f}\left(t\right)\lambda^{n}\left(t\right) & m\left(\pi\left(t\right)\ind\right)\cdot\left(1-e^{ity}\right)dt\biggr|\nonumber \\
= & \left|\int_{C_{\delta}}\hat{f}\left(t\right)\Re\left(\lambda^{n}\left(t\right)m\left(\pi\left(t\right)\ind\right)\right)\Re\left(1-e^{ity}\right)dt\right|\nonumber \\
 & +\left|\int_{C_{\delta}}\hat{f}\left(t\right)\Im\left(\lambda^{n}\left(t\right)m\left(\pi\left(t\right)\ind\right)\right)\Im\left(1-e^{ity}\right)dt\right|\label{eq: Potential kernel to estimate}
\end{align}
Since $\left|\Re\lambda^{n}\left(t\right)\right|\leq\left|\lambda^{n}\left(t\right)\right|\leq1-ct^{2}$
, and $\left\Vert \pi\left(t\right)\right\Vert \apprle\left|t\right|$,
\begin{align}
\sum_{n=1}^{\infty}\biggl|\int_{C_{\delta}}\hat{f}\left(t\right)\Re\left(\lambda^{n}\left(t\right)m\left(\pi\left(t\right)\ind\right)\right) & \Re\left(1-e^{ity}\right)dt\biggr|\label{eq: Estimation 2- Potential kernel}\\
 & \leq\sum_{n=1}^{\infty}\int\limits _{C_{\delta}}\left(1-ct^{2}\right)^{n}\left|1-\cos ty\right|dt\nonumber \\
 & =\int\limits _{C_{\delta}}\frac{1}{ct^{2}}\left|1-\cos ty\right|dt\nonumber \\
 & =\frac{2}{c}\int\limits _{0}^{\left|\frac{1}{y}\right|}\frac{1}{t^{2}}\left|1-\cos ty\right|dt+\frac{2}{c}\int\limits _{\left|\frac{1}{y}\right|}^{\delta}\frac{1}{t^{2}}\left|1-\cos ty\right|dt\nonumber 
\end{align}
Since $\left|1-\cos ty\right|\leq\left|ty\right|^{2}$ we have 
\begin{equation}
\int\limits _{0}^{\left|\frac{1}{y}\right|}\frac{1}{t^{2}}\left|1-\cos ty\right|dt\leq\left|y\right|.\label{eq: Estimation 56 - potential kernel}
\end{equation}
Now if $\left|\frac{1}{y}\right|\geq\delta$, then $\frac{2}{c}\int_{\left|\frac{1}{y}\right|}^{\delta}\frac{1}{t^{2}}\left|1-\cos ty\right|dt\leq0$
and therefore, by (\ref{eq: Estimation 56 - potential kernel}) and
(\ref{eq: Estimation 2- Potential kernel})
\[
\sum_{n=1}^{\infty}\left|\int_{C_{\delta}}\hat{f}\left(t\right)\Re\left(\lambda^{n}\left(t\right)m\left(\pi\left(t\right)\ind\right)\right)\Re\left(1-e^{ity}\right)dt\right|\apprle\left|y\right|.
\]
On the other hand, if $\left|\frac{1}{y}\right|<\delta$, then 
\[
\int_{\left|\frac{1}{y}\right|}^{\delta}\frac{1}{t^{2}}\left|1-\cos ty\right|dt\leq\int_{\left|\frac{1}{y}\right|}^{\infty}\frac{2}{t^{2}}dt=2\left|y\right|.
\]
Combining this with (\ref{eq: Estimation 56 - potential kernel})
and (\ref{eq: Estimation 2- Potential kernel}) again yields 
\[
\sum_{n=1}^{\infty}\left|\int_{C_{\delta}}\hat{f}\left(t\right)\Re\left(\lambda^{n}\left(t\right)m\left(\pi\left(t\right)\ind\right)\right)\Re\left(1-e^{ity}\right)dt\right|\apprle\left|y\right|.
\]
We estimate the sum over the second term in (\ref{eq: Potential kernel to estimate}).
Using $\pi\left(t\right)=m+\zeta\left(t\right)$, $\left\Vert \zeta\left(t\right)\right\Vert \apprle\left|t\right|$
we obtain
\begin{equation}
\left|\int_{C_{\delta}}\hat{f}\left(t\right)\left(\Im\lambda^{n}\left(t\right)m\left(\pi\left(t\right)\ind\right)\right)\left(\sin ty\right)dt\right|\apprle\int_{C_{\delta}}\left|\Im\lambda^{n}\left(t\right)\right|\left|\sin ty\right|dt+\mbox{\ensuremath{\int}}_{C_{\delta}}\left|t\cdot\lambda^{n}\left(t\right)\right|\left(\sin ty\right)dt.\label{eq:Potential kernel to estimate 2}
\end{equation}
Using $\left|\lambda\left(t\right)\right|\leq1-ct^{2}$ we can estimate
the second term on the right hand side of the above inequality. 
\[
\sum_{n=1}^{\infty}\mbox{\ensuremath{\int}}_{C_{\delta}}\left|t\cdot\lambda^{n}\left(t\right)\right|\left|\sin ty\right|dt\leq\int_{C_{\delta}}\frac{1}{ct}\left|\sin ty\right|dt\apprle\left|y\right|.
\]
The estimation of the first term on the right hand side of \ref{eq:Potential kernel to estimate 2}
will take up the rest of the proof. 

We first note that $\left|\Im\lambda^{n}\left(t\right)\right|\leq n\left|\lambda^{n-1}\left(t\right)\right|\left|\Im\lambda\left(t\right)\right|$.
Then 
\begin{eqnarray*}
\left|\Im\lambda\left(t\right)\right| & = & \left|m\left(\Im P\left(t\right)\eta\left(t\right)\right)\right|\\
 & \leq & \left|m\left(\Im P\left(t\right)\ind\right)\right|+\left|m\left(\Im P\left(t\right)\psi\left(t\right)\right)\right|,
\end{eqnarray*}
where $\psi\left(t\right)=\ind-\eta\left(t\right)$. By definition
of the characteristic function operator, and the fact that $\hat{T}f$
is real if $f$ is real, 
\begin{eqnarray*}
\left|m\left(\Im P\left(t\right)\psi\left(t\right)\right)\right| & \leq & \left|m\left(\hat{T}\left(\cos\left(t\varphi\right)\Im\psi\left(t\right)\right)\right)\right|+\left|m\left(\hat{T}\left(\sin\left(t\varphi\right)\Re\psi\left(t\right)\right)\right)\right|.
\end{eqnarray*}
 Since by (\ref{eq: Extra condition}) $m\left(\psi\left(t\right)\right)=0$
, $m\circ\hat{T}=m$, $\left|1-\cos t\varphi\right|\leq t^{2}\varphi^{2}$,
$\left|\psi\left(t\right)\right|\apprle\left|t\right|$ and by the
positivity of the transfer operator,
\begin{eqnarray*}
\left|m\left(\hat{T}\left(\cos\left(t\varphi\right)\Im\psi\left(t\right)\right)\right)\right| & = & \left|m\left(\left(\cos\left(t\varphi\right)-1\right)\Im\psi\left(t\right)\right)\right|\\
 & \leq & m\left(t^{2}\varphi^{2}\left\Vert \psi\left(t\right)\right\Vert \right)\\
 & \apprle & \left|t\right|^{3}
\end{eqnarray*}
where we have used the finiteness of the second moment of $\varphi.$ 

Since $\psi\left(0\right)=0$, $\Re\psi'\left(0\right)=0$ (because
$\eta'\left(0\right)$ is purely imaginary) and $\psi\left(t\right)$
is twice continuously differentiable, 
\[
\left|m\left(\hat{T}\left(\sin\left(t\varphi\right)\Re\psi\left(t\right)\right)\right)\right|\apprle\left|t\right|^{3}.
\]
Therefore, 
\begin{eqnarray*}
\sum_{n=1}^{\infty}\int_{C_{\delta}}n\left|\lambda\left(t\right)\right|^{n-1}\left|m\left(\Im P\left(t\right)\psi\left(t\right)\right)\right|\left|\sin\left(ty\right)\right|dt & \apprle & \sum_{n=1}^{\infty}\int_{C_{\delta}}n\left(1-ct^{2}\right)^{n-1}\left|t\right|^{3}\left|\sin ty\right|dt\\
 & \leq & \int_{C_{\delta}}\frac{1}{ct^{4}}\left|t\right|^{3}\left|\sin ty\right|dt\\
 & \apprle & \left|y\right|
\end{eqnarray*}
Finally, since $m\left(\varphi\right)=0$ and $m\circ\hat{T}=m$ 
\begin{eqnarray*}
\left|m\left(\Im P\left(t\right)\ind\right)\right| & = & \left|m\left(\sin\left(t\varphi\right)\right)\right|\\
 & = & \left|m\left(\sin\left(t\varphi\right)-t\varphi\right)\right|
\end{eqnarray*}
We split the last integral into parts where $\left|t\varphi\right|\leq1$
and $\left|t\varphi\right|>1$ to obtain 
\begin{eqnarray*}
\left|m\left(\Im P\left(t\right)\ind\right)\right| & \leq & \left|m\left(\ind_{\left\{ \left|t\varphi\right|\leq1\right\} }\left(\sin\left(t\varphi\right)-t\varphi\right)\right)\right|+\left|m\left(\ind_{\left\{ \left|t\varphi\right|>1\right\} }\left(\sin\left(t\varphi\right)-t\varphi\right)\right)\right|\\
 & \leq & \left|m\left(\ind_{\left\{ \left|t\varphi\right|\leq1\right\} }\left|t\varphi\right|^{3}\right)\right|+\left|m\left(2\left|t\varphi\right|\ind_{\left\{ \left|t\varphi\right|>1\right\} }\right)\right|
\end{eqnarray*}
Thus, summing over $n$ and again using $\left|\lambda\left(t\right)\right|\leq\left(1-ct^{2}\right)$
we have
\begin{align}
\sum_{n=1}^{\infty}\int_{C_{\delta}}n\left|\lambda\left(t\right)\right|^{n-1}\left|m\left(\Im P\left(t\right)\ind\right)\right| & \left|\sin\left(ty\right)\right|dt\label{eq: eq 2}\\
\leq & \int_{C_{\delta}}\frac{1}{ct^{4}}m\left(\left|t\varphi\right|^{3}\ind_{\left\{ \left|t\varphi\leq1\right|\right\} }\right)\left|\sin\left(ty\right)\right|dt\nonumber \\
 & +2\int_{C_{\delta}}\frac{1}{ct^{4}}m\left(\left|t\varphi\right|\ind_{\left\{ \left|t\varphi\right|>1\right\} }\right)\left|\sin\left(ty\right)\right|dt\nonumber 
\end{align}
 Bounding $\left|\sin ty\right|$ by $\left|ty\right|$ and changing
the order of integration in the first term gives 
\begin{eqnarray}
\int_{C_{\delta}}\frac{1}{ct^{4}}m\left(\left|t\varphi\right|^{3}\ind_{\left\{ \left|t\varphi\leq1\right|\right\} }\right)\left|\sin ty\right|dt & \leq & m\left(\left|\varphi\right|^{3}\int_{-\left|\varphi\right|^{-1}}^{\left|\varphi\right|^{-1}}\left|y\right|dt\right)\label{eq: 1}\\
 & = & m\left(2\left|\varphi\right|^{2}\right)\left|y\right|\nonumber \\
 & \apprle & \left|y\right|\nonumber 
\end{eqnarray}
Changing the order of integration in the second term of (\ref{eq: eq 2})
and using the fact the the integrand is an even function of $t$,
gives 
\begin{eqnarray*}
2\int_{C_{\delta}}\frac{1}{ct^{4}}m\left(\left|t\varphi\right|\ind_{\left\{ \left|t\varphi\right|>1\right\} }\right)\left|\sin\left(ty\right)\right|dt & \leq & 4m\left(\left|\varphi\right|\int_{\left|\varphi\right|^{-1}}^{\delta}\frac{1}{t^{2}}\left|y\right|dt\right)\\
 & \lesssim & \left|y\right|.
\end{eqnarray*}
This completes the proof.\end{proof}
\begin{prop}
\label{prop: 4th moment ineq}Let $\delta>0$, $n\in\bbN$. Then there
exists a constant $C>0$ such that for all $x,y\in\bbR$ such that
\[
m\biggl(\Bigl(\sum_{k=1}^{n}f\left(S_{n}-x\right)-f\left(S_{n}-y\right)\Bigr)^{4}\biggr)\leq C\left(n\left|x-y\right|^{2}\right).
\]
\end{prop}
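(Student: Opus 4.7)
The plan is as follows. Writing $g_k := f(S_k - x) - f(S_k - y)$, I expand
\[
m\Bigl(\bigl(\sum_{k=1}^n g_k\bigr)^4\Bigr) = \sum_{k_1, k_2, k_3, k_4 = 1}^n m(g_{k_1}g_{k_2}g_{k_3}g_{k_4}),
\]
and by symmetry reduce to bounding $\sum_{k_1 < k_2 < k_3 < k_4 \leq n} m(g_{k_1}g_{k_2}g_{k_3}g_{k_4})$ up to a combinatorial constant. The diagonal contributions (where some pair $k_i = k_j$) are handled separately using the two pointwise bounds $|g_k| \leq \|f'\|_\infty |x-y|$ and $|g_k| \leq 2\|f\|_\infty$, which are available because $f$ is smooth with compactly supported Fourier transform. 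For instance, the full-diagonal term satisfies $\sum_k m(g_k^4) \leq 4\|f\|_\infty^2 \sum_k m(g_k^2) \apprle n|x-y|^2$, which already matches the target; the mixed diagonal terms are handled analogously by combining these pointwise bounds with the potential-kernel-style decay of correlations when not all indices coincide.

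For strictly ordered tuples, set $n_j := k_j - k_{j-1}$ (with $k_0 := 0$) and, for $t \in \bbR^4$, $s_j := t_j + t_{j+1} + \cdots + t_4$. The Fourier inversion formula $g_k = \int_{\bbR} \hat f(t)(e^{-itx} - e^{-ity}) e^{itS_k}\,dt$ together with the transfer-operator identity
\[
m\bigl(e^{i\sum_j t_j S_{k_j}}\bigr) = m\bigl(P^{n_4}(s_4)P^{n_3}(s_3)P^{n_2}(s_2)P^{n_1}(s_1)\ind\bigr),
\]
obtained by iterating $m(A \cdot e^{itS_n}) = m(P^n(t)A)$, yields the representation
\[
m(g_{k_1}g_{k_2}g_{k_3}g_{k_4}) = \int_{\bbR^4}\prod_{j=1}^4 \hat f(t_j)(e^{-it_j x} - e^{-it_j y})\,m\bigl(P^{n_4}(s_4)\cdots P^{n_1}(s_1)\ind\bigr)\,d^4 t.
\]
After interchanging sum (over $n_j \geq 1$ with $\sum_j n_j \leq n$) and integral, I split the integration domain at $C_\delta := (-\delta,\delta)$ for the $\delta$ of Lemma \ref{lem: Perturbation theorem 1}: on $\bbR^4 \setminus C_\delta^4$, Lemma \ref{lem: Exponential decay on compacts} together with the compact support of $\hat f$ yields exponential decay of $\|P^{n_j}(s_j)\|$ in at least one $n_j$, giving an overall contribution bounded by $C|x-y|^2$. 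On $C_\delta^4$, I apply the spectral decomposition $P^{n_j}(s_j) = \lambda^{n_j}(s_j)\pi(s_j) + N^{n_j}(s_j)$ of Proposition \ref{prop:Perturbation theorem} and use the estimates $|\lambda(s)| \leq 1 - cs^2$ and $\|\pi(s)f - m(f)\ind\| \leq C|s|\,\|f\|$ of Lemma \ref{lem: Perturbation theorem 1} to conclude $\sum_{n_j=1}^n |\lambda^{n_j}(s_j)| \leq \min(n, (cs_j^2)^{-1})$.

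Using the joint constraint $\sum n_j \leq n$ to absorb the factor $n$ into exactly one of the four geometric sums, bounding the remaining three by $(cs_j^2)^{-1}$, and then making the unit-Jacobian substitution $(t_1,\ldots,t_4) \mapsto (s_1,\ldots,s_4)$, the principal term essentially decouples into four one-dimensional integrals, two of which I bound by
\[
\int_{C_\delta} |\hat f(\cdot)|\,s^{-2}\,|e^{-i\cdot x} - e^{-i\cdot y}|\,ds \apprle |x-y|
\]
(precisely the integrals handled in the proof of Proposition \ref{prop: Potential kernel }), and the other two by $\int |\hat f| \cdot 2 < \infty$; pairing gives the target $Cn|x-y|^2$. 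The main obstacle is the bookkeeping of the non-principal terms in the spectral decomposition: the correction $\zeta(s_j) := \pi(s_j) - m(\cdot)\ind$ inserts extra factors of $|s_j|$ between adjacent $\lambda^{n_j}$ factors (and must be shown to improve, rather than worsen, the estimate, which it does by converting $s_j^{-2}$ into $|s_j|^{-1}$), and the single factor $n$ must be allocated to the sum whose $s_j$ is smallest---handled by a further partition of $C_\delta^4$ according to the relative sizes of the $|s_j|$. Both steps mirror in spirit the delicate case analysis in the proof of Proposition \ref{prop: Potential kernel } but are carried out in four variables with non-commuting corrections.
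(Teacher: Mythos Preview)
Your overall strategy---Fourier inversion, the transfer-operator identity, spectral decomposition near zero, and exponential decay away from zero---matches the paper's. But the step where you claim that after the substitution $(t_1,\ldots,t_4)\mapsto(s_1,\ldots,s_4)$ ``the principal term essentially decouples into four one-dimensional integrals'' does not hold, and this is exactly where the real work lies. In the $s$-variables the factors $\hat f(t_j)(e^{-it_jx}-e^{-it_jy})$ become functions of $s_j-s_{j+1}$, so adjacent $s$-variables remain coupled; meanwhile the poles $(cs_j^2)^{-1}$ coming from $\sum_{n_j}|\lambda^{n_j}(s_j)|$ live in the $s$-variables. If you simply take absolute values and try to integrate, you meet non-integrable singularities: for example $\int_{C_\delta}(cs_3^2)^{-1}\,ds_3$ diverges at $s_3=0$, and none of the factors $|\hat f(s_3-s_4)|$, $|e^{-i(s_3-s_4)x}-e^{-i(s_3-s_4)y}|$ vanish there. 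The partition ``according to the relative sizes of the $|s_j|$'' does not cure this, since on the region where $|s_4|$ is smallest one still has $\int_{|s_3|\ge|s_4|}s_3^{-2}\,ds_3\sim|s_4|^{-1}$, reintroducing a divergence. More conceptually, the potential-kernel gain of Proposition~\ref{prop: Potential kernel } requires the factor $(1-e^{it(x-y)})$ and the eigenvalue $\lambda^n(t)$ to sit in the \emph{same} integration variable; in your setup they sit in $t_j$ and $s_j$ respectively.

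The paper closes this gap with an algebraic identity you are missing: after the change of variables it rewrites
\[
\prod_{l=1}^{4}\bigl(e^{i(t_l-t_{l-1})x}-e^{i(t_l-t_{l-1})y}\bigr)
=\sum_{(z_2,z_4)\in\{x,y\}^2} e^{it_2 z_2}\bigl(1-e^{it_1\zeta(z_2)}\bigr)\,e^{it_4 z_4}\bigl(1-e^{it_3\zeta(z_4)}\bigr),
\]
with $\zeta(x)=y-x$, $\zeta(y)=x-y$. Each summand is now a genuine product of single-variable functions in the \emph{same} variables $t_1,\ldots,t_4$ that carry $\lambda^{n_l}(t_l)$. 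This alignment lets one pair the two factors $(1-e^{it_l\zeta})$ with the potential-kernel estimate (each yielding $|x-y|$) and the two factors $e^{it_l z}$ with Proposition~\ref{prop:LLT} (each yielding $\sqrt{n}$), for the product $n|x-y|^2$. Your write-up needs either this identity or a substitute that achieves the same variable alignment; as it stands, the decoupling step is a gap.
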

\begin{proof}
Opening brackets we obtain 
\begin{equation}
m\left(\left(\sum_{k=1}^{n}f\left(S_{k}-x\right)-f\left(S_{k}-y\right)\right)^{4}\right)=\sum_{\left(k_{1},...,k_{4}\right)\in\left\{ 1,...n\right\} ^{4}}m\left(\prod_{l=1}^{4}\left(f\left(S_{k_{l}}-x\right)-f\left(S_{k_{l}}-y\right)\right)\right)\label{eq: Moment ineq 1-1}
\end{equation}
It is clear that that the left hand term is not greater than $4!$
times the same sum over all tuples $\left(k_{1},...,k_{4}\right)\in\left\{ 1,...,n\right\} ^{4}$
where $k_{1}\leq k_{2}\leq k_{3}\leq k_{4}$. Thus we assume that
$k_{1}\leq k_{2}\leq k_{3}\leq k_{4}$. By inversion formula for the
Fourier transform 
\begin{eqnarray*}
m\biggl(\prod_{l=1}^{4}\bigl(f\bigl(S_{k_{l}}-x\bigr)-f\bigl(S_{k_{l}}-y\bigr)\bigr)\biggr) & = & m\biggl(\int\limits _{\bbR^{4}}\prod_{l=1}^{4}\hat{f}\left(t_{l}\right)e^{it_{l}S_{k_{l}}}\left(e^{it_{l}x}-e^{it_{l}y}\right)dt_{1}...dt_{4}\biggr)\\
 & = & \int\limits _{\bbR^{4}}\prod_{l=1}^{4}\hat{f}\left(t_{l}\right)\left(e^{it_{l}x}-e^{it_{l}y}\right)m\Bigl(\prod_{l=1}^{4}e^{it_{l}S_{k_{l}}}\Bigr)dt_{1}...d_{t_{4}}
\end{eqnarray*}
where the last equality follows follows by changing order of integration. 

Writing $k_{0}=0$, by definition of the characteristic function operator
we have 
\begin{eqnarray*}
m\Bigl(\prod_{l=1}^{4}e^{it_{l}S_{k_{l}}}\Bigr) & = & m\Bigl(\prod_{l=1}^{4}e^{i\sum_{j=l}^{4}t_{l}\bigl(S_{k_{l}}-S_{k_{l-1}}\bigr)}\Bigr)\\
 & = & m\biggl(\biggl(\prod_{l=1}^{4}P^{k_{l}-k_{l-1}}\Bigl(\sum_{j=l}^{4}t_{j}\Bigr)\biggr)\left(\ind\right)\biggr).
\end{eqnarray*}
Thus, 

\[
\begin{aligned}m\Bigl(\prod_{l=1}^{4}\Bigl(f\bigl(S_{k_{l}}-x\bigr)-f & \bigl(S_{k_{l}}-y\bigr)\Bigr)\Bigr)\\
 & =\int\limits _{\bbR^{4}}m\Bigl(\prod_{l=1}^{4}P^{k_{l}-k_{l-1}}\Bigl(\sum_{j=l}^{4}t_{j}\Bigr)\Bigr)\bigl(\ind\bigr)\Bigr)\Bigr)\cdot\prod_{l=1}^{4}\hat{f}\left(t_{l}\right)\left(e^{it_{l}x}-e^{it_{l}y}\right)dt_{1}...d_{4}.
\end{aligned}
\]
Performing a change of variables $z_{i}=\sum_{k=1}^{i}t_{i}$ $i=1,...,4$,
and writing $t_{0}=0$, $n_{l}=k_{l}-k_{l-1}$ we obtain 

\[
\begin{aligned}m\Bigl(\prod_{l=1}^{4}\Bigl(f & \bigl(S_{k_{l}}-x\bigr)-f\bigl(S_{k_{l}}-y\bigr)\Bigr)\Bigr)\\
 & =\int\limits _{\bbR^{4}}m\left(\prod_{l=1}^{4}P^{n_{l}}\left(t_{i}\right)\left(\ind\right)\right)\prod_{l=1}^{4}\hat{f}\left(t_{l}\right)\left(e^{i\left(t_{l}-t_{l-1}\right)x}-e^{i\left(t_{l}-t_{l-1}\right)y}\right)dt_{1}...dt_{4}
\end{aligned}
\]
Next, we need to simplify the expression $\prod_{l=1}^{4}\left(e^{i\left(t_{l}-t_{l-1}\right)x}-e^{i\left(t_{l}-t_{l-1}\right)y}\right)$.
$ $Let $\zeta\left(x\right)=y-x$ and $\zeta\left(y\right)=x-y$.
We claim that 
\begin{equation}
\prod_{l=1}^{4}\left(e^{i\left(t_{l}-t_{l-1}\right)x}-e^{i\left(t_{l}-t_{l-1}\right)y}\right)=\sum_{\left(z_{2},z_{4}\right)}e^{it_{2}z_{2}}\left(1-e^{it_{1}\zeta\left(z_{2}\right)}\right)e^{it_{4}z_{4}}\left(1-e^{it_{3}\zeta\left(z_{4}\right)}\right)\label{eq:momnet ineq 3-1}
\end{equation}
where the sum is over all $\left(z_{2},z_{4}\right)\in\left\{ x,y\right\} ^{2}$.
To see this note that 
\[
\left(e^{it_{l}x}-e^{it_{l}y}\right)\left(e^{i\left(t_{l+1}-t_{l}\right)x}-e^{i\left(t_{l+1}-t_{l}\right)y}\right)=e^{it_{l+1}x}\left(1-e^{it_{l}\left(x-y\right)}\right)+e^{it_{l+1}y}\left(1-e^{it_{l}\left(y-x\right)}\right)
\]
and the claim follows by implementing this on the first two terms
and the last two terms in the product on the left hand side of (\ref{eq:momnet ineq 3-1})
separately.

To shorten the writing we write $\psi\left(t,z\right)=1-e^{it\zeta\left(z\right)}$.
Thus, 

\[
\begin{aligned}m\Bigl(\bigl(\sum_{k=1}^{n}f\bigl(S_{k}-x\bigr) & -f\bigl(S_{k}-y\bigr)\bigr)^{4}\Bigr)\\
 & \leq4!\sum\limits _{n_{1,...,}n_{4}}\Re\Bigl[\int_{\bbR^{4}}\prod_{l=1}^{4}\hat{f}\bigl(t_{l}\bigr)\cdot m\Bigl(\prod_{l=1}^{4}P^{n_{l}}\left(t_{l}\right)\left(\ind\right)\Bigr)\\
 & \times\sum\limits _{\left(z_{2},z_{4}\right)}e^{it_{2}z_{2}}\psi\left(t_{1},z_{2}\right)e^{it_{4}z_{4}}\psi\left(t_{3},z_{4}\right)dt_{1}...dt_{4}\Bigr]
\end{aligned}
\]
where the sum is over all tuples $\left(n_{1},...,n_{4}\right)\in\left\{ 0,...,n\right\} ^{4}$.
The following inequality completes the proof. 

\begin{align}
\sum\limits _{n_{1,...,}n_{4}}\Re\int\limits _{\bbR^{4}}\prod_{l=1}^{4}\hat{f}\bigl(t_{l}\bigr)\cdot & m\Bigl(\prod_{l=1}^{4}P^{n_{l}}\left(t_{l}\right)\left(\ind\right)\Bigr)\sum\limits _{\left(z_{2},z_{4}\right)}e^{it_{2}z_{2}}\psi\left(t_{1},z_{2}\right)e^{it_{4}z_{4}}\psi\left(t_{3},z_{4}\right)dt_{1}...dt_{4}\label{eq: moment ineq 5-1}\\
 & \apprle n\left|x-y\right|^{2}\nonumber 
\end{align}
To prove this we use the expansion of the characteristic function
operator in proposition \ref{prop:Perturbation theorem} and lemma
\ref{lem: Perturbation theorem 1} and propositions \ref{prop:LLT},\ref{prop: Potential kernel }.
Let $\delta$ be as in lemma (\ref{eq: Expansion of eigenvalue})
and write $C_{\delta}^{i}=\left[-\delta,\delta\right]^{i}$ and $\bar{C}_{\delta}=\bbR^{i}\setminus C_{\delta}^{i}$.
Also denote $\zeta\left(t\right)f=m\left(f\right)\ind-\pi\left(t\right)f$
and recall from \ref{lem: Perturbation theorem 1} that $\left\Vert \zeta\left(t\right)\right\Vert \apprle\left|t\right|$
for $\left|t\right|<\delta$. For fixed $\left(z_{2},z_{4}\right)\in\left\{ x,y\right\} ^{2}$,

\begin{equation}
\begin{aligned}\int\limits _{C_{\delta}^{4}}\prod_{l=1}^{4} & \hat{f}\bigl(t_{l}\bigr)\cdot m\Bigl(\prod_{l=1}^{4}P^{n_{l}}\left(t_{l}\right)\left(\ind\right)\Bigr)e^{it_{2}z_{2}}\psi\left(t_{1},z_{2}\right)e^{it_{4}z_{4}}\psi\left(t_{3},z_{4}\right)dt_{1}...dt_{4}\\
 & =\int\limits _{C_{\delta}^{4}}\prod_{l=1}^{4}\hat{f}\bigl(t_{l}\bigr)\cdot\lambda^{n_{4}}\left(t_{4}\right)m\left(\left(\prod_{l=1}^{3}P^{n_{l}}\left(t_{l}\right)\right)\left(\ind\right)\right)e^{it_{2}z_{2}}\psi\left(t_{1},z_{2}\right)e^{it_{4}z_{4}}\psi\left(t_{3},z_{4}\right)dt_{1}...dt_{4}\\
 & +\int\limits _{C_{\delta}^{4}}\prod_{l=1}^{4}\hat{f}\bigl(t_{l}\bigr)\lambda^{n_{4}}\left(t_{4}\right)m\left(\zeta\left(t_{4}\right)\left(\prod_{l=1}^{3}P^{n_{l}}\left(t_{l}\right)\right)\left(\ind\right)\right)e^{it_{2}z_{2}}\psi\left(t_{1},z_{2}\right)e^{it_{4}z_{4}}\psi\left(t_{3},z_{4}\right)dt_{1}...dt_{4}\\
 & +\int\limits _{C_{\delta}^{4}}\prod_{l=1}^{4}\hat{f}\bigl(t_{l}\bigr)N^{n_{4}}\left(t_{4}\right)\prod_{l=1}^{3}P^{n_{l}}\left(t_{l}\right)e^{it_{2}z_{2}}\psi\left(t_{1},z_{2}\right)e^{it_{4}z_{4}}\psi\left(t_{3},z_{4}\right)dt_{1}...dt_{4}
\end{aligned}
\label{eq: moment estimate 6-1}
\end{equation}
The proof is conducted similarly for all terms. We continue to expand
the products $\prod_{l=1}^{3}P^{n_{l}}\left(t_{l}\right)$ using $P^{n}\left(t\right)\left(\cdot\right)=\lambda^{n}\left(t\right)m\left(\cdot\right)\ind+\zeta\left(t\right)\left(\cdot\right)+N^{n}\left(t\right)\left(\cdot\right)$.
After this we split the integrals into a product of integrals. For
example we can split the first term on the right of (\ref{eq: moment estimate 6-1})
into 

\begin{align*}
\int\limits _{C_{\delta}^{4}}\prod_{l=1}^{4}\hat{f}\bigl(t_{l}\bigr) & \cdot\lambda^{n_{4}}\left(t_{4}\right)m\left(\left(\prod_{l=1}^{3}P^{n_{l}}\left(t_{l}\right)\right)\left(\ind\right)\right)e^{it_{2}z_{2}}\psi\left(t_{1},z_{2}\right)e^{it_{4}z_{4}}\psi\left(t_{3},z_{4}\right)dt_{1}...dt_{4}\\
= & \int\limits _{C_{\delta}^{4}}\prod_{l=1}^{4}\hat{f}\bigl(t_{l}\bigr)\lambda^{n_{4}}\left(t_{4}\right)e^{it_{4}z_{4}}dt_{4}\\
 & \times m\left(\left(\prod_{l=1}^{3}P^{n_{l}}\left(t_{l}\right)\right)\left(\ind\right)\right)e^{it_{2}z_{2}}\psi\left(t_{1},z_{2}\right)e^{it_{4}z_{4}}\psi\left(t_{3},z_{4}\right)dt_{1}..dt_{3}.
\end{align*}
For the other terms we may not split the integrals right away because
we have operators of the form $\zeta\left(t\right)\circ\zeta\left(s\right)$
or $N^{n}\left(t\right)\circ N^{k}\left(s\right)$. To handle these
kind of terms we continue to expand, and eventually will be able to
split the integrals by taking norms (splitting first the terms similar
to the above term prior to taking absolute values). Since by the proof
of proposition \ref{prop: Potential kernel } we have 
\[
\sum_{k=0}^{\infty}\int_{C_{\delta}}\left|\hat{f}\left(t\right)\right|\left|t\right|\left|\lambda^{k}\left(t\right)\right|\left(1-e^{it\left(x-y\right)}\right)dt\apprle\int_{C_{\delta}}\frac{\left|t\right|^{2}}{ct^{2}}\left|x-y\right|dt\apprle\left|x-y\right|
\]
\[
\sum_{n=0}^{\infty}\int_{C_{\delta}}\left|\hat{f}\left(t\right)\right|\left\Vert N^{k}\left(t\right)\right\Vert \left(1-e^{it\left(x-y\right)}\right)dt\apprle\left|x-y\right|
\]
and 
\[
\sum_{n=0}^{\infty}\left|\int_{C_{\delta}}\hat{f}\left(t\right)\lambda^{k}\left(t\right)\left(1-e^{it\left(x-y\right)}\right)dt\right|\apprle\left|x-y\right|
\]
the sum of every term involving a function $\psi$ is bounded by a
constant multiplied by $\left|x-y\right|$. 

On the other hand, by proposition \ref{prop:LLT} each term of the
form 
\[
\int_{C_{\delta}}\left|\hat{f}\left(t\right)\lambda^{k}\left(t\right)\right|\left|e^{itx}\right|dt
\]
\[
\int_{C_{\delta}}\left|\hat{f}\left(t\right)\lambda^{k}\left(t\right)\right|\left|t\right|\left|e^{itx}\right|dt
\]
\[
\int_{C_{\delta}}\left|\hat{f}\left(t\right)\right|\left\Vert N^{n}\left(t\right)\right\Vert dt
\]
is bounded by a constant multiplied by $\frac{1}{\sqrt{n}}$. It follows
that the sum from $0$ to $n$ of such terms is bounded by $\sqrt{n}$.
Since the integral in \ref{eq: moment ineq 5-1} has precisely two
terms involving functions $\psi$ it follows that
\[
\sum_{n_{1},...,n_{4}}\int\limits _{C_{\delta}^{4}}\prod_{l=1}^{4}\hat{f}\bigl(t_{l}\bigr)\cdot m\Bigl(\prod_{l=1}^{4}P^{n_{l}}\left(t_{l}\right)\left(\ind\right)\Bigr)e^{it_{2}z_{2}}\psi\left(t_{1},z_{2}\right)e^{it_{4}z_{4}}\psi\left(t_{3},z_{4}\right)dt_{1}...dt_{4}\apprle\left|x-y\right|^{2}.
\]
 Estimating 
\[
\sum_{n_{1},...,n_{4}}\int_{\bar{C}_{\delta}^{4}}\prod_{l=1}^{4}\hat{f}\bigl(t_{l}\bigr)\cdot m\Bigl(\prod_{l=1}^{4}P^{n_{l}}\left(t_{l}\right)\left(\ind\right)\Bigr)e^{it_{2}z_{2}}\psi\left(t_{1},z_{2}\right)e^{it_{4}z_{4}}\psi\left(t_{3},z_{4}\right)dt_{1}...dt_{4}
\]
is easier since at least one of the four integrals at hand is over
$\bbR\setminus\left(-\delta,\delta\right)$ and therefore, exponentially
tends to $0$ by lemma \ref{lem: Exponential decay on compacts}.
Expanding this integral similarly to the integral over $\bar{C}_{\delta}$
we obtain a similar estimate
\[
\sum\int_{\bar{C}_{\delta}^{4}}\prod_{l=1}^{4}\hat{f}\bigl(t_{l}\bigr)\cdot m\Bigl(\prod_{l=1}^{4}P^{n_{l}}\left(t_{l}\right)\left(\ind\right)\Bigr)e^{it_{2}z_{2}}\psi\left(t_{1},z_{2}\right)e^{it_{4}z_{4}}\psi\left(t_{3},z_{4}\right)dt_{1}...dt_{4}\apprle\left|x-y\right|^{2}
\]
whence the proposition follows. \end{proof}
\begin{cor}
\label{cor: fourth moment ineq for l_n}There exists a constant $C$
such that for all $n\in\bbN$, $x,y\in\bbR$, 
\[
m\left(\left(\left(l_{n}\left(x\right)-l_{n}\left(y\right)\right)\right)^{4}\right)\leq C\left|x-y\right|^{2}.
\]
\end{cor}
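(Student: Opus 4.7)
The plan is to read the corollary as a rescaling of Proposition \ref{prop: 4th moment ineq}. Recall that
\[
l_n(x) - l_n(y) = n^{-1/2}\sum_{k=1}^n\bigl(f(S_k - \sqrt{n}\,x) - f(S_k - \sqrt{n}\,y)\bigr),
\]
so taking the fourth power pulls out a factor of $n^{-2}$:
\[
m\bigl((l_n(x)-l_n(y))^4\bigr) = n^{-2}\, m\!\left(\Bigl(\sum_{k=1}^n\bigl(f(S_k - \sqrt{n}\,x) - f(S_k - \sqrt{n}\,y)\bigr)\Bigr)^4\right).
\]

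Now apply Proposition \ref{prop: 4th moment ineq} with the points $\sqrt{n}\,x$ and $\sqrt{n}\,y$ in place of $x$ and $y$. That proposition provides a constant $C>0$, independent of the points and of $n$, such that the inner expectation is bounded by $C\, n\,|\sqrt{n}\,x - \sqrt{n}\,y|^2 = C\, n^2 |x-y|^2$. Dividing by $n^2$ yields the claimed bound $C|x-y|^2$, with the same constant.

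The step deserving the most care is really a sanity check rather than an obstacle: one must confirm that the hypothesis of Proposition \ref{prop: 4th moment ineq} applies uniformly in the pair of points, since the constant there is asserted to be independent of $x,y \in \bbR$, and that the dependence on $n$ is genuinely linear so that after dividing by the normalization $n^{-2}$ the $n$-dependence disappears. Both are present in the statement of the proposition, so the corollary follows immediately with no further estimation required.
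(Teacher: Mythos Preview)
Your proof is correct and follows essentially the same route as the paper: both expand $l_n(x)-l_n(y)$, pull out the factor $n^{-2}$ from the fourth power, and apply Proposition~\ref{prop: 4th moment ineq} at the points $\sqrt{n}\,x$, $\sqrt{n}\,y$ to obtain the bound $Cn^{2}|x-y|^{2}$ before dividing by $n^{2}$.
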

\begin{proof}
By proposition \ref{cor: fourth moment ineq for l_n} 
\begin{eqnarray*}
m\left(\left(\left(l_{n}\left(x\right)-l_{n}\left(y\right)\right)\right)^{4}\right) & = & \frac{1}{n^{2}}m\left(\left(\sum_{k=0}^{n}f\left(S_{k}-\sqrt{n}x\right)-f\left(S_{k}-\sqrt{n}y\right)\right)^{4}\right)\\
 & \leq & \frac{C}{n^{2}}\left(n^{2}\left|x-y\right|^{2}\right)\\
 & = & C\left|x-y\right|^{2}.
\end{eqnarray*}
\end{proof}
\begin{prop}
\label{prop:Variance ineq}There exists a constant $C$ such that
for all $x\in\bbR$, $n\in\bbN$, 
\[
m\left(\left(\sum_{k=1}^{n}f\left(S_{k}-x\right)\right)^{2}\right)\leq Cn.
\]
\end{prop}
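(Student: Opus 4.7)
The plan is to expand the square and estimate the off-diagonal covariance-type terms using the characteristic function operators. First write
\begin{equation*}
m\biggl(\Bigl(\sum_{k=1}^n f(S_k - x)\Bigr)^2\biggr) = \sum_{k=1}^n m\bigl(f(S_k - x)^2\bigr) + 2\sum_{1\le j<k\le n} m\bigl(f(S_j - x)f(S_k - x)\bigr).
\end{equation*}
The diagonal is trivially at most $\|f\|_\infty^2 n$ since $f$ is bounded. For the off-diagonal I apply Fourier inversion to each copy of $f$, use $S_k - S_j = S_{k-j}\circ T^j$, and iterate the transfer operator property $\hat T(h\cdot g\circ T) = g\cdot\hat T h$ exactly as in the derivation of $m(e^{itS_n}) = m(P^n(t)\ind)$, obtaining, for $j<k$,
\begin{equation*}
m\bigl(f(S_j - x)f(S_k - x)\bigr) = \int\!\!\int \hat f(s)\hat f(t)\, e^{-i(s+t)x}\, m\bigl(P^{k-j}(t)P^j(s+t)\ind\bigr)\, ds\,dt.
\end{equation*}

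Setting $l = k-j$ and applying Fubini reduces the off-diagonal sum to a double integral of a double sum in $l,j$. Since $\hat f$ has compact support, the $(s,t)$-domain is compact; I split according to whether each of $t$ and $s+t$ lies in $(-\delta,\delta)$, with $\delta$ from lemma \ref{lem: Perturbation theorem 1}. For $\tau$ in a compact set bounded away from $0$, lemma \ref{lem: Exponential decay on compacts} gives $\|P^l(\tau)\| \apprle r^l$, so $\sum_l \|P^l(\tau)\| = O(1)$. For $|\tau|<\delta$, the expansion $P^n(\tau) = \lambda^n(\tau)\pi(\tau) + N^n(\tau)$ combined with $|\lambda(\tau)|\le 1-c\tau^2$ and $\|N^n(\tau)\| \apprle q^n$ yields $\sum_{l\le n}\|P^l(\tau)\| \apprle A(\tau) := \min(n,\, 1/\tau^2)$. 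Via $\|P^l(t)P^j(s+t)\ind\|_\infty \apprle \|P^l(t)\|\cdot\|P^j(s+t)\|$ (using (A1)), the off-diagonal is bounded by a constant times
\begin{equation*}
\int\!\!\int |\hat f(s)||\hat f(t)|\, A(t)\, A(s+t)\, ds\,dt.
\end{equation*}

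A change of variables $(u,v) = (s+t, t)$ makes the integrand factor, and since the $(u,v)$-region of integration is compact this is at most
\begin{equation*}
\|\hat f\|_\infty^2\Bigl(\int A(v)\,dv\Bigr)\Bigl(\int A(u)\,du\Bigr) \apprle \bigl(\sqrt n\bigr)^2 = n,
\end{equation*}
where each one-dimensional integral over the relevant compact interval is bounded by $C\sqrt n$ by splitting at $|\tau| = 1/\sqrt n$: for $|\tau|<1/\sqrt n$ the integrand is at most $n$ on an interval of length $2/\sqrt n$, while for $1/\sqrt n \le |\tau| \le M$ we have $\int \tau^{-2}\,d\tau \le 2\sqrt n$.

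The main obstacle is bookkeeping: the coarse operator-norm bound above actually suffices here, but one has to separate carefully the near-$0$ and away-from-$0$ regimes of $t$ and of $s+t$, and check that the cross terms arising from $P^n = \lambda^n\pi + N^n$ (which each contain at least one geometrically small $N^n$ factor) only contribute $O(\sqrt n)$. The controlling heuristic is that the kernel $\lambda^n(\tau)$ concentrates at scale $1/\sqrt n$, so each factor $\int\hat f(\tau)\sum_{n\le N}\lambda^n(\tau)\,d\tau$ contributes $O(\sqrt N)$, and squaring produces the linear-in-$n$ bound.
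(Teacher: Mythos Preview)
Your proposal is correct and follows essentially the same approach as the paper: expand the square, apply Fourier inversion and the identity $m(e^{itS_j}e^{isS_k}) = m(P^{k-j}(t)P^j(s+t)\ind)$, split according to whether the frequencies lie near or away from $0$, and use the bounds $|\lambda(\tau)|\le 1-c\tau^2$ and $\|N^n\|\apprle q^n$ together with lemma \ref{lem: Exponential decay on compacts}. The only cosmetic difference is the order of operations---the paper integrates in $\tau$ first (invoking proposition \ref{prop:LLT} to get $\int|\lambda^m(\tau)|\,d\tau\apprle m^{-1/2}$) and then sums $\sum_{l,k}\frac{1}{\sqrt{k-l}}\frac{1}{\sqrt{l}}\apprle n$, whereas you sum in the time index first to obtain $A(\tau)=\min(n,\tau^{-2})$ and then integrate $\int A(\tau)\,d\tau\apprle\sqrt n$; these are equivalent reorderings of the same estimate.
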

\begin{proof}
Using similar methods to proposition \ref{prop: 4th moment ineq}
we have
\begin{eqnarray*}
m\left(\left(\sum_{k=1}^{n}f\left(S_{n}-x\right)\right)\right) & = & \sum_{k,l=1}^{n}m\left(f\left(S_{l}-x\right)f\left(S_{k}-x\right)\right)\\
 & \leq & 2!\sum_{l=1}^{n}\sum_{k=l}^{n}m\left(f\left(S_{l}-x\right)f\left(S_{k}-x\right)\right)\\
 & \apprle & \sum_{l=1}^{n}\sum_{k=l}^{n}m\left(\int\limits _{\bbR^{2}}\hat{f}\left(t_{1}\right)\hat{f}\left(t_{2}\right)e^{it_{1}\left(S_{l}-x\right)}e^{it_{2}\left(S_{k}-x\right)}dt_{1}dt_{2}\right)\\
 & = & \sum_{l=1}^{n}\sum_{k=l}^{n}\int\limits _{\bbR^{2}}\hat{f}\left(t_{1}\right)\hat{f}\left(t_{2}\right)P^{k-l}\left(t_{1}\right)P^{l}\left(t_{2}+t_{1}\right)\left(\ind\right)e^{-i\left(t_{1}+t_{2}\right)x}dt_{1}dt_{2}\\
 & \leq & \sum_{l=1}^{n}\sum_{k=l}^{n}\int_{\bbR^{2}}\left|\hat{f}\left(t_{1}\right)\hat{f}\left(t_{2}\right)\right|\Bigl\Vert P^{k-l}\left(t_{1}\right)\Bigr\Vert\Bigl\Vert P^{l}\left(t_{2}\right)\Bigr\Vert dt_{1}dt_{2}
\end{eqnarray*}
where the last inequality follows by a change of variables and taking
absolute values. Writing $C_{\delta}=\left(-\delta,\delta\right)^{2}$
, $\bar{C}_{\delta}=\bbR^{2}\setminus\left(-\delta,\delta\right)^{2}$
and taking $\delta$ as in lemma \ref{lem: Perturbation theorem 1},
we obtain by proposition \ref{prop:LLT}

\begin{eqnarray*}
\sum_{l=1}^{n}\sum_{k=l}^{n}\int_{C_{\delta}}\left|\hat{f}\left(t_{1}\right)\hat{f}\left(t_{2}\right)\right|\Bigl\Vert P^{l-k}\left(t_{1}\right)\Bigr\Vert\Bigl\Vert P^{k}\left(t_{2}\right)\Bigr\Vert dt_{1}dt_{2} & \lesssim & \sum_{l=1}^{n}\sum_{k=l}^{n}\int_{-\delta}^{\delta}\left|\lambda^{l-k}\left(t_{1}\right)\right|dt_{1}\int_{-\delta}^{\delta}\left|\lambda^{k}\left(t_{2}\right)\right|dt_{2}\\
 & \leq & n.
\end{eqnarray*}
By the use of lemma \ref{lem: Exponential decay on compacts} and
proposition \ref{prop:LLT}
\[
\sum_{l=1}^{n}\sum_{k=l}^{n}\int_{\bar{C}_{\delta}}\left|\hat{f}\left(t_{1}\right)\hat{f}\left(t_{2}\right)\right|\Bigl\Vert P^{l-k}\left(t_{1}\right)\Bigr\Vert\Bigl\Vert P^{k}\left(t_{2}\right)\Bigr\Vert dt_{1}dt_{2}\lesssim n,
\]
which completes the proof. \end{proof}
\begin{cor}
\label{cor:Variance ineq}There exists a constant $C$ such that for
all $n\in\bbN$, $x\in\bbR$, $m\left(\left(l_{n}\left(x\right)\right)^{2}\right)\leq C$.\end{cor}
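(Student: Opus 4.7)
The plan is to recognize that this is an immediate rescaling of Proposition \ref{prop:Variance ineq}, exactly analogous to how Corollary \ref{cor: fourth moment ineq for l_n} was deduced from Proposition \ref{prop: 4th moment ineq}. First I would unfold the definition
\[
l_n(x) = n^{-1/2} \sum_{k=1}^n f(S_k - \sqrt{n}\, x),
\]
so that
\[
\bigl(l_n(x)\bigr)^2 = \frac{1}{n} \Bigl(\sum_{k=1}^n f(S_k - \sqrt{n}\, x)\Bigr)^2.
\]

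Next I would take expectations with respect to $m$ and apply Proposition \ref{prop:Variance ineq} with the point $\sqrt{n}\,x \in \bbR$ playing the role of the ``$x$'' appearing there. The crucial point, and the reason this works with no extra effort, is that the constant $C$ in Proposition \ref{prop:Variance ineq} is uniform in the real parameter; the bound $Cn$ does not degrade as that parameter is sent to $\sqrt{n}\,x$. This gives
\[
m\Bigl(\bigl(\sum_{k=1}^n f(S_k - \sqrt{n}\, x)\bigr)^2\Bigr) \leq Cn,
\]
and dividing by $n$ yields $m\bigl((l_n(x))^2\bigr) \leq C$ with the same constant.

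There is essentially no obstacle here: the entire content of the corollary is the uniform-in-$x$ nature of the bound already established, combined with the $n^{-1/2}$ scaling built into the definition of $l_n$. No new Fourier/operator estimates are needed, so this is strictly a bookkeeping step that packages Proposition \ref{prop:Variance ineq} into a form directly usable in the tightness arguments of the next section.
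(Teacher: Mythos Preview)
Your proposal is correct and matches the paper's proof essentially line for line: unfold the definition of $l_n(x)$, pull out the factor $n^{-1}$ after squaring, and apply Proposition~\ref{prop:Variance ineq} at the point $\sqrt{n}\,x$ using that its constant is uniform in the spatial parameter. There is nothing to add.
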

\begin{proof}
By proposition \ref{prop:Variance ineq} 
\[
m\left(\left(l_{n}\left(x\right)\right)^{2}\right)=\frac{1}{n}m\left(\left(\sum_{k=1}^{n}f\left(S_{n}-\sqrt{n}x\right)\right)^{2}\right)\leq C.
\]
To prove tightness of the process $l_{n}\left(x\right)$, additionally
to the above estimates we need the following estimate for maxima of
continuous processes. \end{proof}
\begin{prop}
\label{prop: Maxima ineq}Let $\gamma\left(t\right)$ be an almost
surely continuous process on an interval $I$ of length $\delta$.
Assume that for every $\epsilon>0$, $t,s\in I$ we have $P\left(\left|\gamma\left(t\right)-\gamma\left(s\right)\right|\geq\epsilon\right)<C\frac{\left|t-s\right|^{\alpha}}{\epsilon^{\beta}}$,
where $\alpha>1$, $\beta\geq0$, $C>0$. Then $P\left(\sup_{t,s\in I}\left|\gamma\left(t\right)-\gamma\left(s\right)\right|\geq\epsilon\right)\leq\tilde{C}\frac{\delta^{\alpha}}{\epsilon^{\beta}}$. \end{prop}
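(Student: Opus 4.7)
The plan is to establish this by a standard dyadic chaining argument in the spirit of Kolmogorov's continuity theorem. Without loss of generality take $I=[0,\delta]$. For each integer $m\geq 0$ introduce the dyadic grid $t_{k}^{(m)}:=k\delta/2^{m}$, $0\leq k\leq 2^{m}$, and the maximal increment on scale $m$,
\[
M_{m}:=\max_{0\leq k<2^{m}}\Bigl|\gamma\bigl(t_{k+1}^{(m)}\bigr)-\gamma\bigl(t_{k}^{(m)}\bigr)\Bigr|.
\]
The first step is to combine the hypothesis with a union bound to get, for any threshold $\epsilon_{m}>0$,
\[
P\left(M_{m}\geq\epsilon_{m}\right)\leq 2^{m}\cdot C\,\frac{(\delta/2^{m})^{\alpha}}{\epsilon_{m}^{\beta}}=C\,\delta^{\alpha}\cdot 2^{m(1-\alpha)}\epsilon_{m}^{-\beta}.
\]

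The second step uses the almost sure continuity of $\gamma$ to reduce to dyadic points. For a fixed sample, let $\gamma^{(m)}(t)$ denote the value of $\gamma$ at the dyadic point of $\mathcal{D}_{m}:=\{t_{k}^{(m)}\}$ nearest to $t$. Then $|\gamma^{(m+1)}(t)-\gamma^{(m)}(t)|\leq M_{m+1}$ uniformly in $t$, and $\gamma^{(m)}\to\gamma$ uniformly on $I$ by continuity and compactness of $I$. Telescoping and applying the triangle inequality to any two points $s,t\in I$ yields
\[
\sup_{t,s\in I}|\gamma(t)-\gamma(s)|\leq 2\sum_{m=0}^{\infty}M_{m}\qquad\text{a.s.}
\]

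The third step is to distribute $\epsilon$ among the scales. Fix $r\in(0,1)$ satisfying $2^{1-\alpha}r^{-\beta}<1$ (possible because $\alpha>1$; if $\beta=0$ any $r\in(0,1)$ will do), and set $\epsilon_{m}:=\tfrac{1}{2}(1-r)r^{m}\epsilon$, so that $2\sum_{m}\epsilon_{m}=\epsilon$. Then by Step 2 and a union bound,
\[
P\Bigl(\sup_{t,s\in I}|\gamma(t)-\gamma(s)|\geq\epsilon\Bigr)\leq\sum_{m=0}^{\infty}P(M_{m}\geq\epsilon_{m})\leq C\delta^{\alpha}\epsilon^{-\beta}\Bigl(\tfrac{2}{1-r}\Bigr)^{\beta}\sum_{m=0}^{\infty}\bigl(2^{1-\alpha}r^{-\beta}\bigr)^{m},
\]
and the geometric series converges by the choice of $r$, giving the desired inequality with $\tilde{C}$ depending only on $C,\alpha,\beta$.

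The only slightly delicate point is Step 2, where one must ensure that the dyadic chaining bound really controls the supremum over the whole interval and not just over dyadic points; the almost sure continuity hypothesis on $\gamma$ together with compactness of $I$ handles this, since the supremum over a dense set equals the supremum over the interval. Steps 1 and 3 are routine, the condition $\alpha>1$ being exactly what is needed for the series in Step 3 to converge for some admissible $r$.
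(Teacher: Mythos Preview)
Your proof is correct and follows essentially the same dyadic chaining argument as the paper: both reduce to $I=[0,\delta]$, introduce the maximal nearest-neighbor increments $M_m$ (the paper calls them $A_k$), establish $\sup_{t,s\in I}|\gamma(t)-\gamma(s)|\le 2\sum_m M_m$ via continuity, distribute $\epsilon$ geometrically across scales, and use $\alpha>1$ to make the resulting series converge. The only differences are cosmetic (your ``nearest point'' interpolation versus the paper's explicit rounding map $t\mapsto t'$, and slightly different parametrizations of the geometric weights).
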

\begin{proof}
Assume without loss of generality that $I=\left[0,\delta\right]$
and let $D_{k}:=\left\{ 0,\frac{1}{2^{k}}\delta,\frac{2}{2^{k}}\delta,...,\frac{2^{k-1}}{2^{k}}\delta,\delta\right\} $.
Let $B_{k}=\max_{t_{1},t_{2}\in D_{k}}\left|\gamma\left(t_{1}\right)-\gamma\left(t_{2}\right)\right|$
and let $A_{k}=\max_{t_{1},t_{2}\in B_{k},\left|t_{1}-t_{2}\right|=\frac{1}{2^{k}}\delta}$.
For $t\in D_{k}$, define a point $t'\in D_{k-1}$ by 
\[
t'=\begin{cases}
t & t\in D_{k-1}\\
t-2^{-k} & t\notin D_{k-1}
\end{cases}.
\]
Then for every $t\in D_{k}$, $\left|\gamma\left(t\right)-\gamma\left(t'\right)\right|\leq A_{k}$
and therefore, for $t_{1},t_{2}\in D_{k}$ we have
\begin{eqnarray*}
\left|\gamma\left(t_{1}\right)-\gamma\left(t_{2}\right)\right| & \leq & \left|\gamma\left(t_{1}\right)-\gamma\left(t_{1}'\right)\right|+\left|\gamma\left(t_{1}'\right)-\gamma\left(t_{2}'\right)\right|+\left|\gamma\left(t_{2}'\right)-\gamma\left(t_{2}\right)\right|\\
 & \leq & \left|\gamma\left(t_{1}'\right)-\gamma\left(t_{2}'\right)\right|+2A_{k}.
\end{eqnarray*}
Since $t_{1}'$, $t_{2}'$ are in $D_{k-1}$, it follows that $B_{k}\leq B_{k-1}+2A_{k}$.
Since $A_{0}=B_{0}$, we conclude by induction that $B_{k}\leq2\sum_{i=1}^{k}A_{i}$.
By continuity of the paths of $\gamma$, we get that $\lim_{k\rightarrow\infty}B_{k}=\sup_{t,s\in I}\left|\gamma\left(t\right)-\gamma\left(s\right)\right|$
and therefore, 
\[
\sup_{t.s\in I}\left|\gamma\left(t\right)-\gamma\left(s\right)\right|\leq2\sum_{k=1}^{\infty}A_{k}.
\]
Suppose that $\theta\in\left(0,1\right)$ and let $r$ be such that
$r\cdot\sum_{k=1}^{\infty}\theta^{k}=\frac{1}{2}$. Then 
\begin{eqnarray*}
P\left(\sup_{t.s\in I}\left|\gamma\left(t\right)-\gamma\left(s\right)\right|\geq\epsilon\right) & \leq & P\left(2\sum_{k=1}^{\infty}A_{k}\geq\epsilon\right)\\
 & \leq & \sum_{k=1}^{\infty}P\left(A_{k}\geq r\epsilon\theta^{k}\right)\\
 & \leq & \sum_{k=1}^{\infty}C2^{k}\left(\frac{\delta}{2^{k}}\right)^{\alpha}\frac{1}{\left(r\epsilon\theta^{k}\right)^{\beta}}\\
 & = & C\frac{\delta^{\alpha}}{\left(r\epsilon\right)^{\beta}}\sum_{k=1}^{\infty}\frac{1}{\left(2^{\alpha-1}\theta^{\beta}\right)^{k}}.
\end{eqnarray*}
Since $\alpha-1>0$ and $\beta\geq0$, there exists $\theta$ for
which the sum converges and the claim follows. 
\end{proof}

\section{\label{sec:Tightness}Tightness of $l_{n}$ in $D$.}

A sequence $\left\{ X_{n}\right\} $ of random variables taking values
in a complete and separable metric space $\left(X,d\right)$ is tight
if for every $\epsilon>0$ there exists a compact $K\subset X$ such
that for every $n\in\bbN$,
\[
P_{n}(K)>1-\epsilon,
\]
where $P_{n}$ denotes the distribution of $X_{n}$ . By Prokhorov's
Theorem (see \cite{Bil}) relative compactness of $t_{n}(x)$ in $C$
is equivalent to tightness. Therefore, we are interested in characterizing
tightness in $C$.

For $h>0$, denote by $C_{\left[-h,h\right]}$ the space of continuous
functions on $C_{\left[-h,h\right]}$. For $x\left(t\right)$ in $C_{\left[-h,h\right]},$
set

\[
\omega_{x}(\delta):=\sup_{\left|s-t\right|<\delta}\left\{ \left|x\left(s\right)-x\left(t\right)\right|:s,t\in C_{\left[-h,h\right]},\,\left|s-t\right|<\delta\right\} .
\]
$\omega_{x}\left(\delta\right)$ is called the modulus of continuity
of $x$. Due to the Arzela - Ascoli theorem, the modulus of continuity
plays a central role in characterizing precompactness in the space
$C_{\left[-h,h\right]}$, with the Borel $\sigma$-algebra generated
by the topology of uniform convergence. 

The next theorem is a characterization of tightness in the space $C$. 
\begin{thm}
\label{thm:-characterization of tightness}\cite{Bil} The sequence
$l_{n}$ is tight in $C$ if and only if its restriction to $\left[-h,h\right]$
is tight in $C_{\left[-h,h\right]}$ for every $h\in\bbR_{+}$. The
sequence $l_{n}$ is tight in $C_{\left[-h,h\right]}$ if and only
if the following two conditions hold:

(i) $\forall x\in\left[-h,h\right],\;\lim\limits _{a\rightarrow\infty}\limsup\limits _{n\rightarrow\infty}m\left(\left|l{}_{n}\left(x\right)\right|\geq a\right)=0.$

(ii) $\forall\epsilon>0,\,\lim\limits _{\delta\rightarrow0}\limsup\limits _{n\rightarrow\infty}m\left(\omega_{l_{n}}\left(\delta\right)\geq\epsilon\right)=0.$ \end{thm}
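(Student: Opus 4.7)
The plan is to use Prokhorov's theorem to reduce tightness of probability measures to relative compactness in the underlying metric space, then invoke the Arzel\`a--Ascoli theorem to characterize compact subsets of $C_{[-h,h]}$, and finally patch these local characterizations into a global one on $C$ via a diagonal argument.

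First I would handle the equivalence between tightness in $C$ and tightness of every restriction to $C_{[-h,h]}$. The metric $d(f,g)=\sum_{n\geq 1} 2^{-n}\sup_{[-n,n]}|f-g|$ (or its equivalent bounded version) realizes $C$ as the projective limit of the spaces $C_{[-h,h]}$, and the restriction maps $\pi_h:C\to C_{[-h,h]}$ are continuous. The ``only if'' direction is immediate: if $K\subseteq C$ is compact with $m(l_n\in K)>1-\varepsilon$, then $\pi_h(K)$ is compact in $C_{[-h,h]}$. For the converse, given $\varepsilon>0$, choose for each $h\in\mathbb{N}$ a compact set $K_h\subseteq C_{[-h,h]}$ with $m(\pi_h(l_n)\in K_h)>1-\varepsilon 2^{-h}$ uniformly in $n$; then the set $K=\{g\in C:\pi_h(g)\in K_h\text{ for all }h\in\mathbb{N}\}$ is closed in $C$, its image under each $\pi_h$ is precompact, and standard arguments show $K$ itself is compact in the Fr\'echet topology. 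The union bound gives $m(l_n\in K)>1-\varepsilon$.

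Next I would prove the Arzel\`a--Ascoli-based characterization on $C_{[-h,h]}$. The ``only if'' direction is straightforward: if $(l_n)$ is tight and $K_\varepsilon\subseteq C_{[-h,h]}$ is compact with $m(l_n\in K_\varepsilon)>1-\varepsilon$, then $K_\varepsilon$ is uniformly bounded and equicontinuous by Arzel\`a--Ascoli, which immediately yields (i) and (ii). For the converse, fix $\varepsilon>0$ and use (i) at a single point (say $x=0$) to get $a$ with $m(|l_n(0)|\geq a)<\varepsilon/2$ uniformly in $n$ for $n$ large; combined with (ii) applied to some $\delta_k\downarrow 0$ along a sequence $\varepsilon_k=\varepsilon 2^{-k-1}$, one obtains a set
\[
A=\bigl\{g\in C_{[-h,h]}: |g(0)|\leq a,\ \omega_g(\delta_k)\leq 1/k\text{ for all }k\bigr\}
\]
with $m(l_n\in A)>1-\varepsilon$ uniformly in $n$. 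By Arzel\`a--Ascoli, $A$ has compact closure, proving tightness.

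The only real obstacle is bookkeeping in the converse direction on $C$: one must ensure that the choice of compact sets $K_h$ on each interval can be assembled into a single compact subset of $C$, which requires that the restriction of $K_h$ to $[-h',h']$ with $h'<h$ remains consistent with $K_{h'}$; a diagonal selection handles this. The rest is a routine application of Prokhorov's theorem and the classical Arzel\`a--Ascoli characterization, both of which are standard (see \cite{Bil}), so the actual content of the argument lies in choosing the right sequence of moduli in condition (ii) so that the resulting equicontinuous family is compact.
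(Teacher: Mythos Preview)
Your sketch is essentially the standard proof of this result, and it is correct in outline; the only minor quibble is that in the converse on $C_{[-h,h]}$ you need condition (i) at every point (or at one point plus the equicontinuity from (ii)) to get uniform boundedness, which you do handle correctly by anchoring at $x=0$ and using the modulus of continuity.

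However, there is nothing to compare against here: the paper does not supply its own proof of this theorem. It is stated with a citation to Billingsley \cite{Bil} and used as a black box; the subsequent Proposition~\ref{pro:The-sequence is tight} simply verifies conditions (i) and (ii) for the particular sequence $l_n$. So your proposal is not an alternative to the paper's argument but rather a reconstruction of the classical Prokhorov/Arzel\`a--Ascoli proof that the paper takes for granted.
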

\begin{prop}
\label{pro:The-sequence is tight}The sequence $\left\{ l_{n}\right\} _{n=1}^{\infty}$
is tight. \end{prop}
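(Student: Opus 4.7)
The plan is to apply Theorem \ref{thm:-characterization of tightness} and verify its two conditions for each compact interval $[-h,h]$. The serious analytic work has already been carried out in Section \ref{sec:Estimates}, so this proof should amount to assembling Corollaries \ref{cor: fourth moment ineq for l_n} and \ref{cor:Variance ineq} with Proposition \ref{prop: Maxima ineq} and Chebyshev's inequality.

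For the pointwise boundedness condition (i), I would simply invoke Corollary \ref{cor:Variance ineq}, which gives $m\bigl(l_n(x)^2\bigr)\leq C$ uniformly in $n$ and $x$. Chebyshev's inequality then yields $m(|l_n(x)|\geq a)\leq C/a^2$, which tends to $0$ as $a\to\infty$ uniformly in $n$.

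For the modulus-of-continuity condition (ii), the key is Corollary \ref{cor: fourth moment ineq for l_n}, which combined with Chebyshev's inequality gives
\[
m\bigl(|l_n(x)-l_n(y)|\geq\epsilon\bigr)\leq \frac{C|x-y|^2}{\epsilon^4}.
\]
This is precisely the hypothesis of Proposition \ref{prop: Maxima ineq} with $\alpha=2$ and $\beta=4$, and $l_n$ has continuous sample paths by continuity of $f$. Consequently, for any subinterval $J\subseteq[-h,h]$ of length $\ell$,
\[
m\Bigl(\sup_{s,t\in J}|l_n(s)-l_n(t)|\geq\epsilon\Bigr)\leq \tilde{C}\,\frac{\ell^2}{\epsilon^4}.
\]

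To pass from this local estimate to a bound on $\omega_{l_n}(\delta)$, I would cover $[-h,h]$ by $O(h/\delta)$ overlapping subintervals $J_1,\ldots,J_M$ each of length $2\delta$, arranged so that every pair $s,t\in[-h,h]$ with $|s-t|<\delta$ lies entirely inside some $J_k$. A union bound then gives
\[
m\bigl(\omega_{l_n}(\delta)\geq\epsilon\bigr)\leq M\cdot\tilde{C}\,\frac{(2\delta)^2}{\epsilon^4}\leq \frac{C_h\,\delta}{\epsilon^4},
\]
which tends to $0$ as $\delta\to 0$ uniformly in $n$, verifying (ii). Since the variance and fourth-moment bounds of Section \ref{sec:Estimates} do all the work, there is no real obstacle; the only minor care needed is in the covering argument, to ensure that every $\delta$-close pair sits in a single subinterval whose length is still comparable to $\delta$.
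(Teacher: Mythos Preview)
Your proposal is correct and follows essentially the same route as the paper: verify condition (i) via Corollary \ref{cor:Variance ineq} and Chebyshev, then verify condition (ii) by combining Corollary \ref{cor: fourth moment ineq for l_n} with Chebyshev to feed Proposition \ref{prop: Maxima ineq}, and finish with a covering of $[-h,h]$ by $O(h/\delta)$ intervals and a union bound. The only cosmetic differences are that you (correctly) get $\epsilon^{4}$ from the fourth-moment Chebyshev bound where the paper writes $\epsilon^{2}$, and you use overlapping intervals of length $2\delta$ rather than the paper's non-overlapping intervals of length $\delta$; neither affects the argument, since Proposition \ref{prop: Maxima ineq} applies for any $\beta\geq 0$ and the final bound $O(\delta)$ is the same.
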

\begin{proof}
Condition $(i)$ of theorem \ref{thm:-characterization of tightness}
easily follows from corollary \ref{cor:Variance ineq} and Chebychev's
inequality, since for all $x\in\bbR$, $n\in\bbN$, 
\[
m\left(\left|l_{n}\left(x\right)\right|\geq a\right)\leq\frac{m\left(\left(l_{n}\left(x\right)\right)^{2}\right)}{a^{2}}\leq\frac{C}{a^{2}}\underset{a\rightarrow\infty}{\longrightarrow}0.
\]
We prove that condition $(ii)$ of theorem \ref{thm:-characterization of tightness}
holds. In order to do that, we have to show that for fixed $h>0$,
\begin{equation}
\forall\epsilon>0.\ \lim\limits _{\delta\rightarrow0}\limsup\limits _{n\rightarrow\infty}m\left[\sup_{x,y\in[-h.h];\left|x-y\right|<\delta}\left|l_{n}(x)-l_{n}(y)\right|\geq\epsilon\right]=0.\label{eq: sufficient condition for tightness}
\end{equation}
Fix $\epsilon>0$. Then by \ref{cor: fourth moment ineq for l_n}
and Chebychev's inequality there exists a constant $C$ such that
for all $x,y\in\left[-h,h\right]$, 
\[
m\left(\left|l_{n}\left(x\right)-l_{n}\left(y\right)\right|\geq\epsilon\right)\leq\frac{m\left(\left(l_{n}\left(x\right)-l_{n}\left(y\right)\right)^{4}\right)}{\epsilon^{2}}\leq\frac{C\left|x-y\right|^{2}}{\epsilon^{2}}.
\]
Thus, for $x,y\in\left[-h,h\right]$ we have,

\[
m\left(\left|l_{n}\left(x\right)-l_{n}\left(y\right)\right|\geq\epsilon\right)\leq C\frac{\left|x-y\right|^{2}}{\epsilon^{2}}
\]
Let $\delta>0$ and $n>\delta^{-2}$. Then by proposition \ref{prop: Maxima ineq}
\begin{eqnarray*}
m\left(\sup_{x,y\in[-h.h];\left|x-y\right|<\delta}\left|l_{n}(x)-l_{n}(y)\right|\geq4\epsilon\right) & \leq & \sum\limits _{|k\delta|\leq h}m\left(\sup\limits _{k\delta\sqrt{n}\leq x,y\leq\left(k+1\right)\delta\sqrt{n}}\left|l_{n}\left(x\right)-l_{n}\left(y\right)\right|\geq\epsilon\right)\\
 & \leq & \sum_{\left|k\delta\right|\leq h}\frac{\tilde{C}}{\epsilon^{2}}\delta^{2}\leq2h\tilde{C}\delta\underset{\delta\rightarrow0}{\longrightarrow}0
\end{eqnarray*}
whence \ref{eq: sufficient condition for tightness} follows.
\end{proof}

\section{\label{Sec:Identifying the Only Possible Limit Point}Proof of the
main theorem}

In this section we identify $\left(\omega,\int_{\bbR}f\left(x\right)dx\cdot l\right)$
as the unique distributional limit of $\left(\omega_{n},l_{n}\right)$
and complete the proof of theorem \ref{sec:Main theorem}.

\subsection*{Proof of theorem \ref{thm:Main thm}:}

By assumption (A7) and proposition \ref{pro:The-sequence is tight},
$\left(\omega_{n},l_{n}\right)$ is tight in $D\left[0,1\right]\times C$.
Let $\left(p,q\right)$ be a distributional limit of some subsequence
$\left(\omega_{n_{k}},l_{n_{k}}\right)$. We must show that $\left(p,q\right)\overset{d}{=}\left(\omega,\int_{\bbR}f\left(x\right)dx\cdot l\right)$.
In what follows, we assume without loss of generality that the convergent
subsequence is $\left(\omega_{n},l_{n}\right)$ itself. By Skorokhod's
representation theorem there exists a probability space $\left(\varOmega,\mathcal{B},P\right)$
with random functions $\omega'_{n}$, $l'_{n}$, $p'$, $q'$ defined
on it, such that $\left(\omega'_{n},l'_{n}\right)\overset{d}{=}\left(\omega_{n},l_{n}\right)$,
$\left(p',q'\right)\overset{d}{=}\left(p,q\right)$ and $\left(\omega'_{n},l'_{n}\right)$$ $
almost surely converge to $\left(p,q\right)$$ $, i.e $d_{J}\left(\omega'_{n_{k}},p\right)\longrightarrow0$,
$d\left(l_{n_{k}},l\right)\longrightarrow0$ almost surely, where
$d_{J}$ is the metric of $D$ and $d$ is the metric of $C$. By
assumption that $\omega_{n}\limdist\omega$, we have $p\overset{d}{=}\omega$.
Let $G_{k}=\left\{ a_{1},b_{1},...,a_{k},b_{k}:\, a_{i}<b_{i},\: a_{i},b_{i}\in\bbQ,\: i=1,...,k\right\} $
and $G=\bigcup_{k=1}^{\infty}G_{k}$. For 
\[
g=\left\{ a_{1},b_{1},...,a_{k},b_{k}:\, a_{i}<b_{i},\: i=1,...,k\right\} \in G_{k}
\]
 define the transformation $\pi_{g}:C\rightarrow\bbR^{k}$ by 
\[
\pi_{g}\left(h\right)=\left(\int_{a_{1}}^{b_{1}}h\left(x\right)\, dx,...\int_{a_{n}}^{b_{n}}h\left(x\right)\, dx\right)
\]
and $\Pi_{g}:D\rightarrow\bbR^{k}$ by 
\[
\Pi_{g}\left(h\right)=\left(\int_{0}^{1}\ind_{\left[a_{1},b_{1}\right]}\left(h\right)dt,...,\int_{0}^{1}\ind_{\left[a_{k},b_{k}\right]}\left(q\left(t\right)\right)dt\right).
\]
Note that $\pi_{g}$ is continuous and therefore $\lim_{n\rightarrow\infty}\pi_{g}\left(l'_{n}\right)=\pi_{g}\left(q\right)$.
The following fact is proved in \cite{KS}: for all $k\in\bbN$, $g\in G_{k}$,
the function $\pi_{g}\left(\cdot\right)$ is continuous in the Skorokhod
topology at almost every sample point of the Brownian motion, i.e.
if $h_{n}\in D$ converges to $h\in D$ where $h$ is a generic sample
point of a Brownian motion, then $\Pi_{g}\left(h_{n}\right)\longrightarrow\Pi_{g}\left(h\right)$
(here, convergence is of vectors in $\bbR^{k}$). Since $G$ is a
countable set, this implies that almost surely, for all $g\in G$,
\begin{equation}
\Pi_{g}\left(\omega'_{n_{k}}\right)\longrightarrow\Pi_{g}\left(p\right)\label{eq:KS}
\end{equation}
To complete the proof, it is enough to show that for $g\in G_{k}$,
slmost surely 
\begin{equation}
\pi_{g}\left(q'\right)=\int_{\bbR}f\left(x\right)dx\cdot\pi_{g}\left(h\right)\label{eq: Ident. of limit}
\end{equation}
 where $h$ is the local time of the Brownian motion $p$. Since by
definition of local time the equality 
\[
\pi_{g}\left(h\right)=\Pi_{g}\left(p\right)
\]
 is almost surely satisfied for all $g\in G$ and since the transformations
$\pi_{g}\left(h\right)$, $g\in G$ uniquely determine the function
$h$, it would follow that almost surely $q$ coincides with $h$,
i.e that $q$ is the local time of the Brownian motion $p$ as claimed.
Setting $S_{k}^{n}:=\sqrt{n}\sum_{i=0}^{k}\omega_{n}\left(\frac{i}{n}\right)$
we have $l'_{n}\left(x\right)=\frac{1}{\sqrt{n}}\sum_{k=1}^{n}f\left(S_{k}^{n}\right)$.
Fix a positive $\epsilon\in\bbQ$, and $a,b\in\bbQ$, auch that $a<b$.
By straightforward calculations using a change of variable $y=S_{k}-\sqrt{n}x$,
we have 
\begin{eqnarray*}
\int\limits _{a}^{b}f\left(S_{k}^{n}-\sqrt{n}x\right)dx & = & n^{-\frac{1}{2}}\int\limits _{S_{k}-\sqrt{n}b}^{S_{k}-\sqrt{n}a}f\left(x\right)dx\\
 & \leq n^{-\frac{1}{2}} & \ind_{\left[a-\epsilon,b+\epsilon\right]}\left(n^{-\frac{1}{2}}S_{k}^{n}\right)\cdot\int\limits _{-\infty}^{\infty}f\left(x\right)dx\\
 &  & n^{-\frac{1}{2}}+\ind_{\bbR\setminus\left[a-\epsilon,b+\epsilon\right]}\left(n^{-\frac{1}{2}}S_{k}^{n}\right)\cdot\int\limits _{\sqrt{n}\epsilon}^{\infty}f\left(x\right)dx\\
 & \leq & n^{-\frac{1}{2}}\ind_{\left[a-\epsilon,b+\epsilon\right]}\left(n^{-\frac{1}{2}}S_{k}^{n}\right)\cdot\int\limits _{-\infty}^{\infty}f\left(x\right)dx+n^{-\frac{1}{2}}\int\limits _{\sqrt{n}\epsilon}^{\infty}f\left(x\right)dx.
\end{eqnarray*}
This implies 
\[
\int\limits _{a}^{b}l'_{n}\left(x\right)dx\leq\frac{1}{n}\#\left\{ k\in\left\{ 1,...,n\right\} :n^{-\frac{1}{2}}S_{k}^{n}\in\left[a-\epsilon,b+\epsilon\right]\right\} +\int\limits _{\sqrt{n}\epsilon}^{\infty}f\left(x\right)dx.
\]
On the other hand 
\[
\int_{0}^{1}\ind_{\left[a-\epsilon,b+\epsilon\right]}\left(\omega'_{n}\left(t\right)\right)dt=\frac{1}{n}\#\left\{ k\in\left\{ 1,...,n\right\} :S_{k}^{n}\in\left[a-\epsilon,b+\epsilon\right]\right\} 
\]
and by (\ref{eq:KS}) we have 
\[
\lim_{n\rightarrow\infty}\int_{0}^{1}\ind_{\left[a-\epsilon,b+\epsilon\right]}\left(\omega_{n}\left(t\right)\right)dt=\int_{0}^{1}\ind_{\left[a-\epsilon,b+\epsilon\right]}\left(p\left(t\right)\right)dt=\int_{a-\epsilon}^{b+\epsilon}h\left(x\right)dx\quad a.s
\]
 Since $\lim_{n\rightarrow\infty}\int_{\sqrt{n}\epsilon}^{\infty}f\left(x\right)dx=0$,
it follows that 
\[
\lim_{n\rightarrow\infty}\int_{a}^{b}l'_{n}\left(x\right)dx\leq\int\limits _{\bbR}f\left(x\right)dx\int_{a-\epsilon}^{b+\epsilon}h\left(x\right)dx
\]
and since this holds for every rational $\epsilon>0$, we have 
\[
\lim_{n\rightarrow\infty}\int\limits _{a}^{b}l'_{n}\left(x\right)dx=\int\limits _{a}^{b}q'\left(x\right)dx\leq\int\limits _{a}^{b}h\left(x\right)dx.
\]
To obtain a lower bound, imitating the calculations for the upper
bound, we have 
\[
\int\limits _{a}^{b}l'_{n}\left(x\right)dx\geq\frac{1}{n}\#\left\{ k\in\left\{ 1,...,n\right\} :n^{-\frac{1}{2}}S_{k}\in\left[a+\epsilon,b-\epsilon\right]\right\} \int\limits _{-\epsilon\sqrt{n}}^{\epsilon\sqrt{n}}f\left(x\right)dx
\]
and therefore
\[
\lim_{n\rightarrow\infty}\int\limits _{a}^{b}l_{n}\left(x\right)dx=\int\limits _{a}^{b}q\left(x\right)dx\geq\int_{\bbR}f\left(x\right)dx\cdot\int\limits _{a+\epsilon}^{b-\epsilon}l\left(x\right)dx.
\]
It follows that 
\[
\lim_{n\rightarrow\infty}\int_{a}^{b}l'_{n}\left(x\right)dx\geq\int\limits _{\bbR}f\left(x\right)dx\int\limits _{a}^{b}l\left(x\right)dx
\]
and therefore, 
\[
\lim_{n\rightarrow\infty}\int\limits _{a}^{b}l'_{n}\left(x\right)dx=\int\limits _{a}^{b}q'\left(x\right)=\int\limits _{\bbR}f\left(x\right)dx\int\limits _{a}^{b}h\left(x\right)dx\quad a.s.
\]
This proves that (\ref{eq: Ident. of limit}) holds almost surely
for $g\in G_{1}$. The proof for $g\in G_{k}$ is performed using
similar calculations coordinatewise. Thus the proof is complete.


\begin{thebibliography}{ADSZ}
\bibitem[AD]{AD}Aaronson, Jon, and Manfred Denker. \textquotedbl{}Local
limit theorems for partial sums of stationary sequences generated
by Gibbs\textendash Markov maps.\textquotedbl{} Stochastics and Dynamics
1.02 (2001): 193-237.

\bibitem[ADSZ]{ADSZ}Aaronson, Jon, Manfred Denker, Omri Sarig, and
Roland Zweimüller. \textquotedbl{}Aperiodicity of cocycles and conditional
local limit theorems.\textquotedbl{} Stochastics and Dynamics 4, no.
01 (2004): 31-62. 

\bibitem[Au]{Au}Austin, Tim. \textquotedbl{}Scenery entropy as an
invariant of RWRS processes.\textquotedbl{} arXiv preprint arXiv:1405.1468
(2014).

\bibitem[Br]{Br}Bromberg, Michael. ``Weak invariance principle for
the local times of Gibbs-Markov processes.'' arXiv:1406.4174 (2014). 

\bibitem[Bil]{Bil}Billingsley, Patrick. Convergence of probability
measures. John Wiley \& Sons, 2013.

\bibitem[Bow]{Bow}Bowen, Rufus, and Jean-René Chazottes. Equilibrium
states and the ergodic theory of Anosov diffeomorphisms. Vol. 470.
Berlin: Springer-Verlag, 1975.

\bibitem[Bor]{Bor}Borodin, Andrei Nikolaevich. \textquotedbl{}An
asymptotic behaviour of local times of a recurrent random walk with
finite variance.\textquotedbl{} Teoriya Veroyatnostei i ee Primeneniya
26.4 (1981): 769-783.

\bibitem[BI]{BI}Borodin, Andrej N., and Il\textasciiacute dar Abdulovich
Ibragimov. Limit theorems for functionals of random walks. No. 195.
American Mathematical Soc., 1995.

\bibitem[BK]{BK}Bromberg, Michael, and Zemer Kosloff. \textquotedbl{}Weak
invariance principle for the local times of partial sums of Markov
Chains.\textquotedbl{} Journal of Theoretical Probability 27.2 (2014):
493-517.

\bibitem[BS]{BS}Bunimovich, Leonid A., and Ya G. Sinai. \textquotedbl{}Statistical
properties of Lorentz gas with periodic configuration of scatterers.\textquotedbl{}
Communications in Mathematical Physics 78.4 (1981): 479-497.

\bibitem[GH]{GH}Guivarc'h, Yves, and Jean Hardy. \textquotedbl{}Théorèmes
limites pour une classe de chaînes de Markov et applications aux difféomorphismes
d'Anosov.\textquotedbl{} Annales de l'IHP Probabilités et statistiques.
Vol. 24. No. 1. 1988.

\bibitem[Gou]{Gou}Gouëzel, Sébastien. \textquotedbl{}Almost sure
invariance principle for dynamical systems by spectral methods.\textquotedbl{}
The Annals of Probability 38.4 (2010): 1639-1671.

\bibitem[HeH]{HeH}Hennion, Hubert, and Loïc Hervé. Limit theorems
for Markov chains and stochastic properties of dynamical systems by
quasi-compactness. Vol. 1766. Springer Science \& Business Media,
2001.

\bibitem[KS]{KS}Kesten, Harry, and Frank Spitzer. \textquotedbl{}A
limit theorem related to a new class of self similar processes.\textquotedbl{}
Zeitschrift für Wahrscheinlichkeitstheorie und verwandte Gebiete 50.1
(1979): 5-25.

\bibitem[LY]{LY}Lasota, Andrzej, and James A. Yorke. \textquotedbl{}On
the existence of invariant measures for piecewise monotonic transformations.\textquotedbl{}
Transactions of the American Mathematical Society (1973): 481-488.

\bibitem[MP]{MP}Mörters, Peter, and Yuval Peres. Brownian motion.
Vol. 30. Cambridge University Press, 2010.

\bibitem[Na]{Na}Nagaev, S. V. \textquotedbl{}Some limit theorems
for stationary Markov chains.\textquotedbl{} Theory of Probability
\& Its Applications 2.4 (1957): 378-406.

\bibitem[Yo]{Yo}Young, Lai-Sang. \textquotedbl{}Statistical properties
of dynamical systems with some hyperbolicity.\textquotedbl{} Annals
of Mathematics (1998): 585-650.\end{thebibliography}
\end{document}